\newtheorem{mainthm}{Theorem}
\Crefname{mainthm}{Theorem}{Theorems}
\newtheorem{maincor}[mainthm]{Corollary}
\Crefname{maincor}{Corollary}{Corollaries}
\numberwithin{equation}{section}
\newtheorem{lemma}[equation]{Lemma}
\newtheorem{prop}[equation]{Proposition}
\newtheorem{cor}[equation]{Corollary}
\newtheorem{thm}[equation]{Theorem}
\theoremstyle{definition}
\newenvironment{example}
    {\pushQED{\qed}\examplex}
  {\popQED\endexamplex}
\theoremstyle{remark}
\newenvironment{remark}
    {\pushQED{\qed}\remarkx}
  {\popQED\endremarkx}
\newtheorem*{case*}{Case}
\titleformat*{\section}{\normalsize \bfseries \filcenter}
\titleformat*{\subsection}{\normalsize \bfseries }
\newcommand{\Manoa}{M\=anoa}
\newcommand{\Hawaii}{Hawai\kern.05em`\kern.05em\relax i}
\renewcommand{\geq}{\geqslant}
\renewcommand{\leq}{\leqslant}
\newcommand{\beq}{\begin{displaymath}}
\newcommand{\eeq}{\end{displaymath}}
\newcommand{\coloneq}{\ensuremath{\mathrel{\mathop :}=}}
\newcommand{\PP}{\mathbb P}
\newcommand{\QQ}{\mathbb Q}
\newcommand{\RR}{\ensuremath{\mathbb{R}}}
\newcommand{\ZZ}{\mathbb Z}
\newcommand{\cA}{\mathcal{A}}
\newcommand{\cB}{\mathcal B}
\newcommand{\cE}{\mathcal{E}}
\newcommand{\cO}{{\mathcal O}}
\newcommand{\sX}{{\mathcal X}}
\newcommand\wt{\widetilde}
\newcommand\tsX{\widetilde{\sX}}
\newcommand{\Cl}{\on{Cl}}
\newcommand\Coh{\operatorname{Coh}}
\newcommand{\Cox}{\operatorname{Cox}}
\newcommand{\Hom}{\operatorname{Hom}}
\newcommand{\leftexp}[2]{{\vphantom{#2}}^{#1}{#2}}
\newcommand\on{\operatorname}
\newcommand{\weezer}{\leftexp{=}{\kern-0.23em\operatorname{W}}^{\kern-0.21em =}}
\newcommand{\Aut}{\operatorname{Aut}}
\title{\large \textbf{The geometric Merkurjev-Panin Conjecture for the Cox category}}
\date{}
\author{{\normalsize Daniel Erman, Andrew Hanlon, Gaku Liu, and Hailun Zheng}}
\newcommand{\Addresses}{{
  \bigskip
  \footnotesize

  \noindent D.~Erman, \textsc{Department of Mathematics, University of \Hawaii \, at \Manoa}\par\nopagebreak
  \noindent \textit{E-mail address}: \texttt{erman@hawaii.edu}

  \medskip
  
  \noindent A.~Hanlon, \textsc{Department of Mathematics, University of Oregon}\par\nopagebreak
  \noindent \textit{E-mail address}: \texttt{ahanlon@uoregon.edu}

  \medskip

  \noindent G.~Liu, \textsc{Department of Mathematics, University of Washington}\par\nopagebreak
  \noindent \textit{E-mail address}: \texttt{gakuliu@uw.edu}

  \medskip

  \noindent H.~Zheng, \textsc{Department of Mathematics, University of \Hawaii \, at \Manoa}\par\nopagebreak
  \noindent \textit{E-mail address}: \texttt{hailunz@hawaii.edu}
  
}}
\begin{document}
\maketitle

\begin{abstract}
    We show that a strong version of the geometric Merkurjev-Panin conjecture holds for the Cox category of a projective toric variety.
    That is, we prove that the full strong exceptional collection of Bondal-Thomsen line bundles is invariant under the group of lattice automorphisms that permute the rays of the toric variety's fan.
    Our result is meant to further illustrate that the Cox category is a natural repository for homological algebra on toric varieties.
\end{abstract}

\section{Introduction}
 A well-known question of Merkurjev and Panin (see the Question immediately after \cite[Corollary 7.8]{MP}) asks if the Grothendieck group of the separable closure of an arithmetic toric variety is a permutation module for the Galois group action.
A geometric generalization of Merkurjev and Panin's question, which initially appears in print in the introduction of Castravet and Tevelev's \cite{CT1}, asks the following: Let $X$ be a smooth, projective toric variety and let $G$ be a finite subgroup of the toric automorphism group of $X$. Is $K_0(X)$ a permutation $G$-module?  In fact, they raised an even stronger possibility:
\begin{quote}
    {\em [W]e do not know any smooth projective toric varieties $X$ with an action of a finite group $\Gamma$ normalizing the torus action
which do not have a $\Gamma$-equivariant exceptional collection $\{E_i\}$ of maximal
possible length.}  \cite[Page 6]{CT1}
\end{quote}
In other words, in the context of this setup, Castravet and Tevelev ask whether $X$ admits a $G$-invariant full, exceptional collection.
Returning to the arithmetic context, the categorification of the Merkurjev-Panin question was also raised in \cite[Question 1.5]{BDMatv}.

We will prove that the conjecture becomes true--and in the strongest possible sense--if one passes from the bounded derived category $D(X)$ to the Cox category of $X$, as defined in \cite{king}.  The definition of $D_{\Cox}(X)$ is given in \cref{sec:background}, but we recall that it is built from the secondary fan of $X$, and it comes with a natural fully faithful map $D(X)\to D_{\Cox}(X)$.

\begin{mainthm}\label{thm:main}
The Bondal-Thomsen collection is a $G$-invariant full strong exceptional collection for $D_{\Cox}(X)$.    In particular, if we write $K_{\Cox}(X)$ for the Grothendieck group of $D_{\Cox}(X)$, then $K_{\Cox}(X)$ is a permutation $G$-module with the classes of the Bondal-Thomsen collection providing the corresponding $G$-invariant basis.
\end{mainthm}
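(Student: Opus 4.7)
The statement packages three claims about the Bondal-Thomsen collection viewed in $D_{\Cox}(X)$: fullness, strong exceptionality, and $G$-invariance. The first two are part of the setup of the Cox category in \cite{king}, so the genuinely new content is $G$-invariance. The permutation-module statement for $K_{\Cox}(X)$ is then formal: any full exceptional collection gives a $\ZZ$-basis of the Grothendieck group, and if the collection is permuted by $G$, this basis is automatically a $G$-permutation basis.

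My first step is to make the $G$-action on $D_{\Cox}(X)$ explicit. By hypothesis each $g\in G$ is a lattice automorphism of $N$ that preserves $\Sigma$ and permutes the rays $\Sigma(1)$. Such a $g$ induces a graded automorphism of the Cox ring permuting the variables $x_\rho$, a compatible automorphism of $\Cl(X)=\ZZ^{\Sigma(1)}/M$, and an automorphism of the secondary fan used in the construction of $D_{\Cox}(X)$. Since that construction depends only on this combinatorial data in a functorial way, one obtains a $G$-action on $D_{\Cox}(X)$ by autoequivalences, compatible with the natural $G$-action on $X$ and with the embedding $D(X)\hookrightarrow D_{\Cox}(X)$.

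With the action in hand, $G$-invariance of the collection reduces to a combinatorial symmetry. The Bondal-Thomsen line bundles are indexed by the image in $\Cl(X)$ of the lattice points in the half-open unit cube $[0,1)^{\Sigma(1)}$ under the map $e_\rho\mapsto [D_\rho]$; equivalently, by classes of the form $\sum_\rho \{-\langle v,\rho\rangle\}D_\rho$ for $v\in N$. Both the cube and the map to $\Cl(X)$ are manifestly invariant under permutations of $\Sigma(1)$, so $G$ permutes the Bondal-Thomsen classes, and hence permutes their corresponding objects in $D_{\Cox}(X)$, which is exactly the desired invariance.

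The main obstacle is the first step: one must pin down carefully how a lattice automorphism permuting rays induces an autoequivalence of $D_{\Cox}(X)$, and verify that this autoequivalence restricts along $D(X)\hookrightarrow D_{\Cox}(X)$ to the expected pullback action on $X$ and sends Bondal-Thomsen generators of the Cox category to Bondal-Thomsen generators. Once functoriality of the Cox-category construction in permutations of $\Sigma(1)$ is nailed down, the theorem follows at once from the intrinsically symmetric combinatorial description of the collection.
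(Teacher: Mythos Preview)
Your plan correctly isolates the structure of the argument and the one genuinely new point: defining a $G$-action on $D_{\Cox}(X)$ that visibly permutes the Bondal--Thomsen objects. The combinatorial invariance of $\Theta$ under permutations of $\Sigma(1)$ is essentially what the paper uses as well (though your $v$ should live in $M_\RR$, not $N$). But the step you flag as the ``main obstacle''---producing the $G$-action on $D_{\Cox}(X)$---is precisely where the paper does its real work, and your appeal to ``functoriality of the Cox-category construction'' is the statement to be proved, not an argument.

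The difficulty is that $D_{\Cox}(X)$ is defined as the triangulated hull of the images $\pi_i^*D(\sX_i)$ inside $D(\tsX)$ for a \emph{chosen} stacky refinement $\tsX$ of all the $\Sigma_i$. Independence of this choice does not by itself give an action of $G$: one needs $g$ to act on some fixed $D(\tsX)$ and then preserve the subcategory. The paper carries this out in two substantive steps that your outline does not supply. First, it checks directly from the description of GKZ cones via concave support functions that $G$ permutes the maximal chambers $\Gamma_i$, so each $g$ yields isomorphisms $\phi_g\colon \sX_i\to \sX_{g(i)}$ (not, in general, automorphisms of a fixed $\sX_i$). Second, it constructs a $G$-equivariant $\tsX$: take the common refinement of the $\Sigma_i$, pass to its barycentric subdivision to force simpliciality while retaining $G$-symmetry, and then choose stacky multiplicities $b_\rho$ that are constant along $G$-orbits of rays. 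Only then does each $g$ induce an autoequivalence $\tilde\phi_{g*}$ of $D(\tsX)$, and the commuting squares $\pi_{g(i)}\circ\tilde\phi_g=\phi_g\circ\pi_i$ show it preserves $D_{\Cox}(X)$ and sends $\pi_i^*\cO_{\sX_i}(-d_\theta)$ to $\pi_{g(i)}^*\cO_{\sX_{g(i)}}(-d_{g_*\theta})$. Your final paragraph names the obstacle accurately; what is missing is exactly this equivariant-cover construction, which is not a formality.
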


Of course, in the rare cases where the Bondal-Thomsen is exceptional on $X$ (see \cite{Bondal} for geometric conditions), the Bondal-Thomsen collection is a $G$-invariant full exceptional collection for $D(X)$.
This follows from \cref{thm:main}, but it also immediately follows from the fact that the Bondal-Thomsen collection is the set of all possible summands of the pushforward of the structure sheaf under toric Frobenius \cite{Thomsen} as the toric Frobenius morphism commutes with all toric morphisms.
In some cases, a subset of Bondal-Thomsen is $G$-invariant and full as is the case for toric Fano three-folds by \cite{uehara}, some toric Fano four-folds by \cite{prabhu-naik}, and a smattering of other cases observed in \cite{BDMatv}.
The geometric Merkurjev-Panin question is also answered affirmatively for the permutohedral toric varieties in \cite{CT1} by giving a full $G$-invariant exceptional collection of sheaves. 
In \cite{sanchez2023derived}, Sanchez strengthens the result of \cite{CT1} giving a full strong $G$-invariant exceptional collection of line bundles for the permutohedral varieties, stellahedral varieties, and type $B_n$ Coxeter varieties.
Note that Sanchez's exceptional collections do not coincide or are even subsets of the Bondal-Thomsen collection as noted in \cref{rem:Sanchezcompare}.
As far as we are aware, the references above encompass all cases where $D(X)$ is known to admit a full $G$-invariant exceptional collection. 

\cref{thm:main} provides a new proof of one of the main results of~\cite{MP}.  Namely, in \cite[Corollary 7.8]{MP}, Merkurjev and Panin show that the Grothendieck group of the separable closure of an arithmetic toric variety is a direct summand of a permutation module over the Galois group. In our setting, since $D(X)$ embeds fully faithfully in $D_{\Cox}(X)$ as an admissible subcategory, we obtain the geometric generalization of their theorem as an immediate consequence of \cref{thm:main}.

\begin{maincor}\label{cor:summand} The Grothendieck group $K_0(X)$ is a summand of a permutation $G$-module, namely $K_{\Cox}(X)$. 
\end{maincor}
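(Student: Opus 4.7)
The plan is to combine \cref{thm:main} with the fact (asserted in the paragraph preceding the corollary) that $D(X)$ embeds fully faithfully in $D_{\Cox}(X)$ as an admissible subcategory. Recall that any admissible embedding $\cT \hookrightarrow \cD$ of triangulated categories yields a semiorthogonal decomposition $\cD = \langle \cT^\perp, \cT \rangle$, and passing to Grothendieck groups this produces a direct sum decomposition $K_0(\cD) \cong K_0(\cT^\perp)\oplus K_0(\cT)$. Applied to the case at hand, this gives a splitting of abelian groups
\[
K_{\Cox}(X) \;\cong\; K_0(X)\oplus K_0\bigl(D(X)^{\perp}\bigr).
\]

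The next step is to upgrade this to a splitting of $G$-modules. The group $G$ acts by autoequivalences on both $D(X)$ and $D_{\Cox}(X)$, and these actions come from the common geometric action of $G$ on the fan of $X$ (and hence on its secondary fan, which governs the construction of $D_{\Cox}(X)$). Since the embedding $D(X)\hookrightarrow D_{\Cox}(X)$ is natural with respect to toric automorphisms, it intertwines the two $G$-actions. Thus both the image of $D(X)$ and its orthogonal complement $D(X)^{\perp}$ are $G$-stable admissible subcategories of $D_{\Cox}(X)$, and the Grothendieck group decomposition above is a decomposition of $G$-modules.

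Finally, \cref{thm:main} tells us that $K_{\Cox}(X)$ is a permutation $G$-module, with basis given by the classes of the Bondal--Thomsen line bundles. Combining this with the previous step shows that $K_0(X)$ is a $G$-equivariant direct summand of a permutation $G$-module, as claimed. The only non-formal input is the $G$-equivariance of the admissible embedding $D(X)\hookrightarrow D_{\Cox}(X)$, which I would expect to be the main thing requiring verification; this should follow from the naturality of the construction of $D_{\Cox}(X)$ from the secondary fan, since every element of $G$ permutes the rays of the fan and therefore acts compatibly on all the data entering King's definition of the Cox category.
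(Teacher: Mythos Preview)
Your proposal is correct and follows essentially the same approach as the paper: the paper's proof (given for the slight generalization in \cref{cor:generalcorB}) simply notes that $\pi^*$ is fully faithful with adjoint $\pi_*$, so the induced maps on $K_0$ realize $K_0(X)$ as a split summand of $K_{\Cox}(X)$, which is a permutation $G$-module by \cref{thm:main}. Your version via semiorthogonal decompositions is an equivalent formulation, and the $G$-equivariance of the embedding you correctly flag as the key point is precisely what the paper establishes in \cref{prop:ExtendingEquivariance}(3) and \cref{lem:GactCox}.
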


Perhaps the primary interest in \cref{thm:main} is that it further underscores the philosophy of \cite{king}.  Over the past several decades, when studying sheaves on a variety it has become increasingly standard to pass from $\Coh(X)$ to $D(X)$ for several reasons, including: $D(X)$ tends to have nicer properties, and yet little is lost because $\Coh(X)$ has a fully faithful embedding into $D(X)$.
In~\cite{king}, the authors go one step further for toric varieties, replacing $D(X)$ by a category $D_{\Cox}(X)$ that sees all of the varieties the from secondary fan of $X$.  The core observation is that $D_{\Cox}(X)$ has even nicer properties than either $\Coh(X)$ or $D(X)$, and  both of these have fully faithful embeddings into $D_{\Cox}(X)$.  In~\cite{king}, the focus was on showing that the Bondal-Thomsen collection provides a full, strong exceptional collection of line bundles, thus realizing King's Conjecture (which was known to be false for $D(X)$ in general).

Theorem~\ref{thm:main} can be seen in a similar light, showing that Castravet and Tevelev's question not only has a positive answer for $D_{\Cox}(X)$, but that in fact, the resulting exceptional collection is a simple and natural set of line bundles.

\begin{remark}
    We have chosen $X$ to be smooth and projective so far only for simplicity of the exposition. 
    Appropriately modified, \cref{thm:main} holds when $X$ is normal and semiprojective (see \cref{thm:generalMain}) and \cref{cor:summand} holds when $X$ is simplicial and semiprojective (see \cref{cor:generalcorB}).
    When working with simplicial $X$, the only modification is that $D(X)$ does not embed fully faithfully into $D_{\Cox}(X)$. However, if $\sX$ is the corresponding smooth Deligne-Mumford stack, $D(\sX)$ fully faithfully embeds into $D_{\Cox}(X)$. When $X$ is semiprojective but not projective, the Bondal-Thomsen collection is not an exceptional collection but rather consists of the summands of a tilting object of $D_{\Cox}(X)$.

    In fact, the Cox category construction depends only on the rays of the fan of $X$ or equivalently its Cox ring. Thus, even if $X$ is not simplicial, we can still associate a Cox category with a $G$-invariant Bondal-Thomsen collection.
    In this case, $D_{\Cox}(X)$ is a noncommutative resolution of $X$ \cite[Section 7.3]{king}.

    Further, we have also simplified the narrative in this introductory section by taking $G$ to be the set of toric automorphisms of $X$. Below, we will show that \cref{thm:main} holds for the larger group of automorphisms of the cocharacter lattice that preserve only the rays of the fan of $X$.
\end{remark}

The essence of proving Theorem~\ref{thm:main} is to show that the $G$-action on $X$ can be naturally extended to a $G$-action on $D_{\Cox}(X)$ that permutes the Bondal-Thomsen collection $\Theta$.  This requires two significant steps.  
\begin{enumerate}
    \item We show that $G$ acts naturally on the secondary fan of $X$, thus inducing an action on the set of toric stacks $\{\sX_1, \dots, \sX_r\}$ corresponding to the maximal chambers of that secondary fan.
    \item We show that one can find a $G$-equivariant simplicial refinement of all of the stacky fans corresponding to $\sX_1, \dots, \sX_r$. 
\end{enumerate}  
The second step allows us to produce a $G$-equivariant, simplicial toric cover $\pi_i\colon \widetilde{\sX}\to \sX_i$ for all $1\leq i \leq r$, which in turns allows us to lift the $G$-action on the secondary fan to the Cox category $D_{\Cox}(X) \subseteq D(\widetilde{\sX})$ by restriction.

As an additional corollary, we prove that the resolution of a toric subvariety $Y\subseteq X$ from \cite{HHL} inherits the equivariant properties of $Y$ in $X$.
See \cref{prop:ginvHHL} for the precise statement.  
In order to obtain this corollary, we describe the $G$-action geometrically in terms of the Bondal stratification, which also makes it easier to understand the $G$ action on $\Theta$ in examples.

\medskip

This paper is organized as follows.
In \cref{sec:background}, we summarize some essential background material.  
\cref{sec:equivariant cover} is focused on the construction of the equivariant cover $\widetilde{\sX}$, which we use in \cref{sec:proofOfMain} to prove our main results.  
In \cref{sec:furtherInvariant}, we relate the $G$-action on $\Theta$ to the Bondal stratification.
\cref{sec:permex} examines the example of a permutahedral variety in some detail.

\subsection{Acknowledgements} We thank Jenia Tevelev for introducing us to the geometric Merkurjev-Panin question and pointing out that \cite{king} is closely related.
We are also grateful to Chris Eur and Alex Fink for explaining aspects of the permutohedral variety.
All of these conversations took place at the Oberwolfach workshop``Toric Geometry" in April 2025, and we thank Oberwolfach for their hospitality and stimulating research environment.
We also thank Matthew R.\ Ballard, Christine Berkesch, Michael K. Brown, Lauren Cranton Heller, David Favero, Sheel Ganatra, Jeff Hicks, Jesse Huang, Oleg Lazarev, and Nicholas Proudfoot for useful conversations. 

D. E. was supported by NSF grants DMS-2200469, DMS-2200469, and DMS-2412044.
A. H. was supported by NSF grants DMS-2549013 (previously as DMS-2412043) and DMS-2549038. 
G. L. was supported by NSF grant DMS-2348785.
H. Z. was supported by NSF Grant DMS-2535689.

\section{Background and setup} \label{sec:background}

In this section, we set our notation more precisely and recall background on toric stacks and the Cox category. We will be rather terse on both fronts and refer to \cite{BCS,GS} for more details on the former and \cite{king} for the latter.

We work over an algebraically closed field $\Bbbk$. We let $N$ be a lattice, $M$ the dual lattice, and $N_\RR \coloneq N \otimes_\ZZ \RR$.
We will use the last notation for the tensor product more generally.
Given a fan $\Sigma$, we let $\Sigma(k)$ be the set of $k$-dimensional cones in $\Sigma$. 
For $\rho \in \Sigma(1)$, we let $u_\rho$ denote the primitive generator of $\rho$. 
We let $G \subset \Aut(N)$ be the group of automorphisms preserving $\Sigma(1)$ as a set.

We will work with smooth toric Deligne-Mumford stacks in the sense of \cite{BCS}.
A smooth toric Deligne-Mumford stack is determined by a simplicial fan $\Sigma$ on $N_\RR$ and a homomorphism $\beta \colon \ZZ^{\Sigma(1)} \to N$ such that for every standard basis vector $e_\rho \in \ZZ^{\Sigma(1)}$, $\beta(e_\rho) = b_\rho u_\rho$ for some $b_\rho > 0$ \cite[Section 3]{BCS}. 
The data of $(\Sigma, \beta)$ is called a stacky fan, and is generalized in \cite{GS}.
Given a stacky fan $(\Sigma, \beta)$, we let $\sX_{\Sigma, \beta}$ be the corresponding smooth Deligne-Mumford stack.
By \cite[Remark 4.5]{BCS} and \cite[Section 3]{GS} more generally, a commutative diagram
\[ \begin{tikzcd}
\ZZ^{\Sigma(1)} \arrow{r}{\widehat{f}} \arrow{d}{\beta} & \ZZ^{\Sigma'(1)} \arrow{d}{\beta'} \\
N \arrow{r}{f} &  N' 
\end{tikzcd}\]
where $f$ is map of fans from $\Sigma$ to $\Sigma'$ gives rise to a unique toric morphism $\phi_f \colon \sX_{\Sigma, \beta} \to \sX_{\Sigma', \beta'}$.

We now turn to the Cox category. 
We fix a semiprojective toric variety $X$ with cocharacter lattice $N$ and fan $\Sigma$. 
We let $\Cl(X)$ denote the class group of $X$, and for any $\rho \in \Sigma(1)$, we let $D_\rho$ be the corresponding torus-invariant divisor.
Without loss of generality and for simplicity, we can assume that $\Sigma$ is simplicial.
In fact, our input data for the following construction is really only the rays of $\Sigma(1)$ or equivalently the $\Cl(X)$-graded Cox ring $S$ of $X$.

We write $\Sigma_{GKZ}(X)$ for the secondary fan of $X$; see~\cite[Chapter 14]{CLSToricVarieties} for a thorough review.  We write $\Gamma_1, \dots, \Gamma_r$ for the maximal chambers of $\Sigma_{GKZ}(X)$.  Each such maximal chamber corresponds to a simplicial fan $\Sigma_i$ with corresponding toric variety $X_i$. 
We write $\sX_i$ for the corresponding smooth toric DM stack corresponding to $(\Sigma_i, \beta_i)$ where $\beta_i \colon \ZZ^{\Sigma_i(1)} \to N$ satisfies $\beta_i(e_\rho) = u_\rho$ for all $\rho \in \Sigma_i(1)$.
We choose a smooth toric DM Stack $\tsX$ with proper birational maps $\pi_i \colon \tsX \to \sX_i$ given by refinements of stacky fans. 
Such a $\tsX$ exists by \cite[Section 3.1]{king}.
The Cox category $D_{\Cox}(X) = D_{\Cox}(S)$ is defined to be the triangulated hull of the $\pi_i^* D(\sX_i)$ inside of $D(\tsX)$. 
By \cite[Corollary 3.9]{king}, $D_{\Cox}(X)$ does not depend on the choice of $\tsX$. 
We note that the the functors $\pi_i^*\colon D(\sX_i)\to D_{\Cox}(X)$ are fully faithful for all $i$ and their adjoints $\pi_{i\ast} \colon D_{\Cox}(X) \to D(\sX_i)$ are localizations. 

The Bondal-Thomsen collection $\Theta \subset \Cl(X)$ is the set of degrees that are linearly equivalent (over $M$) to
\[ \sum_{\rho \in \Sigma(1)} \lfloor -\langle \theta, u_\rho \rangle \rfloor D_\rho \]
for some $\theta \in M_\RR$.
We write the Bondal-Thomsen element associated to $\theta$ as $-d_\theta$. 
If $-d \in \Theta$, then $d$ has image lying in at least one chamber $\Gamma_i$ of $\Sigma_{GKZ}(X)$, and we define
\begin{equation} \label{eq:BTdef} \cO_{\Cox} (-d) = \pi_i^* \cO_{\sX_i}(-d) \end{equation}
as in \cite[Definition 4.1]{king}, which is independent of the choice of $\sX_i$ on which $d$ is nef by \cite[Proposition 4.2]{king}.

\section{Extending the equivariant structure to the Cox category}\label{sec:equivariant cover}
As noted in \cref{sec:background}, the definition of the Cox category involves the choice of a smooth toric DM stack $\tsX$ with proper birational maps $\pi_i\colon \tsX \to \sX_i$ that are given by refinements of stacky fans, for all $1\leq i \leq r$.  
To define the $G$-action on the Cox category, it will be helpful to construct an equivariant structure on the secondary fan $\Sigma_{GKZ}$ and construct a cover $\tsX$ that is compatibly equivariant. 
In this section, we show that this can be achieved.  
For convenience, the main result is summarized below, though the proof will involve several distinct steps.

\begin{prop}\label{prop:ExtendingEquivariance}
    We have the following:
    \begin{enumerate}
        \item  $G$ acts on $\Sigma_{GKZ}(X)$.  In particular, each $g\in G$ permutes the maximal chambers $\{\Gamma_1, \dots, \Gamma_r\}$  of $\Sigma_{GKZ}(X)$.  Thus, for each pair $(g,\sX_i)$, the element $g$ induces a toric isomorphism $\phi_g\colon \sX_i \to \sX_{g(i)}$.
        \item  There exists a smooth toric DM stack $\tsX$ with proper birational maps $\pi_i\colon \tsX \to \sX_i$ that are given by refinements of stacky fans, for all $1\leq i \leq r$, and such that the action of $G$ on $N$ induces automorphisms of the stacky fan of $\tsX$.
        \item  These $G$ actions are compatible in the following sense. For each $g\in G$, let $\tilde{\phi}_g \colon \tsX \to \tsX$ be the induced map.  We have commutative diagrams
    \begin{equation} \label{eq:morhpismdiag} \begin{tikzcd}
        \tsX \arrow{r}{\tilde{\phi}_g} \arrow{d}{\pi_i} & \tsX \arrow{d}{\pi_{g(i)}} \\
        \sX_i \arrow{r}{\phi_g} & \sX_{g(i)} 
    \end{tikzcd} \end{equation}
        for all $1 \leq i \leq r$. 
    \end{enumerate}
\end{prop}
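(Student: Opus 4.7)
The plan is to prove the three claims in order, using the first to identify how $G$ acts on the chambers of the secondary fan, the second to build a $G$-equivariant common refinement by symmetrization, and the third by invoking the functoriality of the stacky-fan construction.

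\emph{Part (1)} should reduce to combinatorics. Since $G \subset \Aut(N)$ preserves $\Sigma(1)$ as a set, it extends to a permutation action on $\ZZ^{\Sigma(1)}$ that is compatible with both the map $M \to \ZZ^{\Sigma(1)}$ (because $G$ acts on $M$ as well) and the map $\ZZ^{\Sigma(1)} \to \Cl(X)$. Passing through the exact sequence $0 \to M \to \ZZ^{\Sigma(1)} \to \Cl(X) \to 0$ yields a $G$-action on $\Cl(X)_\RR$, in which $\Sigma_{GKZ}(X)$ lives. Because the chambers of the secondary fan encode the regular triangulations of the point configuration $\{u_\rho\}_{\rho \in \Sigma(1)}$, and this configuration is permuted by $G$, the maximal chambers are permuted. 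The isomorphism $\phi_g \colon \sX_i \to \sX_{g(i)}$ then arises from the $g$-induced isomorphism of stacky fans $(\Sigma_i, \beta_i) \to (\Sigma_{g(i)}, \beta_{g(i)})$.

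\emph{Part (2)} is the technical heart of the statement. The plan is to begin with any common simplicial refinement $\Sigma^0$ of $\Sigma_1, \dots, \Sigma_r$ (for instance the one constructed in \cite[Section 3.1]{king}) and then symmetrize by defining $\tSigma$ to be the common refinement of the finite collection $\{g(\Sigma^0) : g \in G\}$. Since $G$ permutes the $\Sigma_i$, each translate $g(\Sigma^0)$ is itself a common refinement of the $\Sigma_i$, so $\tSigma$ is as well. By construction $\tSigma$ is $G$-invariant, and it remains simplicial because common refinements of simplicial fans are simplicial. Equipping $\tSigma$ with the canonical stacky structure $\tbeta(e_\rho) = u_\rho$ then produces a smooth toric DM stack $\tsX$ on which $G$ acts by automorphisms of the stacky fan and which admits proper birational morphisms $\pi_i \colon \tsX \to \sX_i$ induced by refinements of stacky fans.

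\emph{Part (3)} would then follow by descending from fans to stacks. Both compositions in \eqref{eq:morhpismdiag} arise from a diagram of stacky fans whose outer square commutes on the nose: both compositions $\tSigma \to \Sigma_{g(i)}$ send a ray $\rho$ of $\tSigma$ to the minimal cone of $\Sigma_{g(i)}$ containing $g(u_\rho)$, and both induce the same lattice map $\ZZ^{\tSigma(1)} \to \ZZ^{\Sigma_{g(i)}(1)}$ thanks to the $G$-equivariance of $\tSigma$ together with the canonical form of $\tbeta$ and $\beta_i$. The functoriality of the stacky-fan-to-stack assignment recalled in \cref{sec:background} then yields commutativity of \eqref{eq:morhpismdiag}.

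The hard part will be verifying the stacky compatibilities in part (2): namely, that the symmetrized refinement, equipped with its canonical $\tbeta$, gives rise to the $\pi_i$ as morphisms of stacky fans rather than merely of fans. Simpliciality and $G$-invariance are immediate from the construction, and the required lift of $\tbeta$ against each $\beta_i$ can be produced ray-by-ray exactly as in the non-equivariant argument of \cite[Section 3.1]{king}, so the symmetrization introduces no obstructions beyond those already handled there.
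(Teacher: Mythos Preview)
Your strategy for Parts~(1) and~(3) is fine and close to the paper's. Part~(2), however, contains a genuine gap: the assertion that ``common refinements of simplicial fans are simplicial'' is false in dimension $\geq 3$. The intersection of two full-dimensional simplicial cones in $\RR^3$ can have more extreme rays than its dimension (for instance, two suitably chosen simplicial cones in $\RR^3$ can intersect in the cone over a hexagon), so your symmetrized fan $\tSigma$ need not be simplicial at all. The paper deals with exactly this obstruction by a different construction: it takes the minimal common refinement $\Sigma_{\mathrm{ref}}$ of $\Sigma_1,\dots,\Sigma_r$---which is already $G$-invariant because $G$ permutes the $\Sigma_i$, so no separate symmetrization step is needed---and then passes to its \emph{barycentric subdivision}, with the barycenter of a cone taken to be the sum of its primitive ray generators. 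Barycentric subdivisions are always simplicial, and since $G$ acts on the cones of $\Sigma_{\mathrm{ref}}$ preserving inclusions, it acts on chains of cones and hence on the subdivision.

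There is also an inconsistency in your treatment of the stacky structure. You first declare $\tbeta(e_\rho)=u_\rho$, but a new ray $\rho\in\tSigma(1)$ lying in the interior of a cone of some $\Sigma_i$ typically has $u_\rho$ expressible only as a $\QQ_{>0}$-combination of that cone's primitive generators, so one must take $\tbeta(e_\rho)=b_\rho u_\rho$ with $b_\rho>1$ to obtain a morphism of stacky fans. Once the $b_\rho$ are nontrivial, $G$-equivariance of $(\tSigma,\tbeta)$ is no longer automatic: it requires $b_\rho$ to be constant on $G$-orbits of rays, which the paper arranges explicitly by replacing each $b_\rho$ with a product (or lcm) over its orbit. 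Your final paragraph invokes the ray-by-ray lift from \cite{king} but does not address this extra $G$-compatibility.
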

We will prove this in several steps.  The proof of the first part is a direct computation involving facts about $\Sigma_{GKZ}(X)$.
\begin{proof}[Proof of Proposition~\ref{prop:ExtendingEquivariance}(1)]
    Recall from \cite[Definition 14.4.2]{CLSToricVarieties} that the cones of $\Sigma_{GKZ}(X)$ are obtained as the images in $\Cl(X)_\RR$ of the sets of the form
    \begin{equation*}
        \wt{\Gamma}_{\overline{\Sigma}, I_\emptyset} = \left\{ a \in \RR^{\Sigma(1)}
        \ \bigg\vert\
        \begin{array}{l} \text{there is a concave\footnotemark} \text{ support function } F \text{ on } \overline{\Sigma} \text{ such that} \\ F(u_\rho) = -a_\rho \text{ for all } \rho \not\in I_\emptyset \text{ and } F(u_\rho) \geq -a_\rho  \text{ for all } \rho \in \Sigma(1)  \end{array} \right\}
    \end{equation*}
    for some $I_\emptyset \subset \Sigma(1)$ and for a generalized fan $\overline{\Sigma}$ in $N_\RR$ whose cones are all generated by $u_\rho$ with $\rho \in \Sigma(1) \setminus I_\emptyset$.\footnotetext{We take the opposite convention of \cite{CLSToricVarieties} and say that the function $-x^2$ is concave.}

    Our first goal is to show that $g$ acts on these cones.
    We note that $G$ acts on $\RR^{\Sigma(1)}$ by $(g \cdot a)_\rho = a_{g^{-1}(\rho)}$. 
    As an automorphism of $N$, $G$ also acts on generalized fans where the cones of $g \cdot \overline{\Sigma}$ are $g(\sigma)$ for $\sigma \in \overline{\Sigma}$.
    In particular, the rays of $g \cdot \overline{\Sigma}$ are generated by $g(\overline{\Sigma}(1))$. 
    Further, if $F$ is a concave support function on $\overline{\Sigma}$ with $F(u_\rho) = - a_\rho$ for all $\rho \not \in I_\emptyset$ and $F(u_p) \geq - a_\rho$, then $F \circ g^{-1}$ is a concave support function function on $\overline{\Sigma}$ such that
    \[ F \circ g^{-1}(u_\rho) = F \circ g^{-1} (g(u_{g^{-1}(\rho)})) \begin{cases}  = -a_{g^{-1}(\rho)} & \text{ for all } \rho \not \in g(I_\emptyset) \\ \geq - a_{g^{-1}(\rho)} & \text{ for all } \rho \in \Sigma(1) \end{cases} . \]
    That is, we have shown that $g(\wt{\Gamma}_{\overline{\Sigma}, I_\emptyset}) = \wt{\Gamma}_{g \cdot \overline{\Sigma}, g(I_\emptyset)} $

    Now, we will show that this action respects the quotient to $\Cl(X)_\RR$ and hence induces an action on $\Sigma_{GKZ}(X)$. If there is an $m \in M_\RR$ such that $a_\rho = \langle m, u_\rho \rangle$ for all $\rho \in \Sigma(1)$, then 
    \[ (g \cdot a)_\rho = a_{g^{-1}(\rho)} = \langle m, u_{g^{-1}(\rho)} \rangle = \langle m, g^{-1}(u_\rho) \rangle = \langle g_* m , u_\rho \rangle \]
    for all $\rho \in \Sigma(1)$ where $g_* \colon M_\RR \to M_\RR$ is the dual to $g^{-1}$. 
    
    To conclude, it remains to check that this action takes chambers to chambers. However, this is immediate from the characterization of chambers in \cite[Proposition 14.4.9(b)]{CLSToricVarieties} and our definition of the action.
    It is also clear that the map of fans given by $g\in G$ is a map of stacky fans as it induces a bijection between the $\Sigma_{\Gamma_i}(1)$ and $\Sigma_{g \cdot \Gamma_i}(1)$.
\end{proof}

\begin{remark}\label{rmk:GactiononSet}
For the remainder of the paper, we will denote the $G$-action on $\{1, \dots, r\}$ coming from the action in \cref{prop:ExtendingEquivariance}(1) on the chambers of $\Sigma_{GKZ}(X)$ by $g\cdot i = g(i)$.
\end{remark}

We break up the proof of Proposition~\ref{prop:ExtendingEquivariance}(2) into two steps:  first, in Lemma~\ref{lem:equivariantRefinement} we use combinatorial methods to produce an equivariant, simplicial refinement of the (non-stacky) fans $\Sigma_i$; then in Lemma~\ref{lem:equivariantRefinementStacky}, we show that this can be upgraded to a refinement of the stacky fans as well.

\begin{lemma}\label{lem:equivariantRefinement}
There is a simplicial fan $\widetilde{\Sigma}$ with the following properties:
\begin{enumerate} 
\item It is a common refinement of all the $\Sigma_i$.
\item The $G$-action on $N$ induces an automorphism of the fan $\widetilde{\Sigma}$.
\end{enumerate}
\end{lemma}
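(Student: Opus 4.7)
My plan is to first form a $G$-invariant (but possibly non-simplicial) common refinement $\Sigma'$ of the $\Sigma_i$, and then refine $\Sigma'$ further to a simplicial fan using a canonical $G$-equivariant construction such as the barycentric subdivision.

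For the first step, I take $\Sigma'$ to be the common refinement of $\Sigma_1, \ldots, \Sigma_r$, i.e.\ the fan whose cones are the nonempty intersections $\bigcap_{i=1}^{r} \sigma_i$ with $\sigma_i \in \Sigma_i$. This is a fan and it refines each $\Sigma_i$ by construction. For $G$-invariance, \cref{prop:ExtendingEquivariance}(1) states that every $g \in G$ permutes the collection $\{\Sigma_i\}_{i=1}^{r}$ via $\Sigma_i \mapsto \Sigma_{g(i)}$, so $g \cdot \bigcap_i \sigma_i = \bigcap_i g(\sigma_i)$ is again a cone of $\Sigma'$, and hence $\Sigma'$ is $G$-stable as a fan.

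For the second step, I plan to define $\widetilde{\Sigma}$ as the barycentric subdivision of $\Sigma'$. Concretely, for each nonzero cone $\sigma \in \Sigma'$ set $b_\sigma = \sum_{\rho \in \sigma(1)} u_\rho$, a vector in the relative interior of $\sigma$, and take the cones of $\widetilde{\Sigma}$ to be the $\cone(b_{\sigma_0}, \ldots, b_{\sigma_k})$ indexed by strict chains $\sigma_0 \subsetneq \sigma_1 \subsetneq \cdots \subsetneq \sigma_k$ in $\Sigma' \setminus \{0\}$. A short rank argument using the strict filtration by the spans $\Span(\sigma_0) \subsetneq \Span(\sigma_1) \subsetneq \cdots \subsetneq \Span(\sigma_k)$ shows that the $b_{\sigma_i}$ are linearly independent, so each such cone is simplicial. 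The $G$-equivariance is automatic from $g(b_\sigma) = b_{g \cdot \sigma}$, which carries chains to chains.

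The main technical point I will need to verify is that $\widetilde{\Sigma}$ really is a fan refining $\Sigma'$: one must check that any two of its cones meet in a common face, and that the cones sitting inside a given $\sigma \in \Sigma'$ cover $\sigma$ and overlap only in lower-dimensional cones of $\widetilde{\Sigma}$. Both reduce, by intersecting each cone of $\Sigma'$ with an affine transversal hyperplane, to the standard statement that the order complex of the face poset of a polytope realizes its barycentric subdivision as a genuine polyhedral subdivision. An essentially equivalent but more hands-on alternative, should the order-complex machinery cause friction, is to iterate $G$-equivariant star subdivisions at the barycenters $b_\sigma$ of non-simplicial cones of minimal dimension (and simultaneously at all their $G$-translates), strictly decreasing the lexicographic pair $(d, n_d)$ of the minimal non-simplicial dimension and the number of such cones until simpliciality is achieved.
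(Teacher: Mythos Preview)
Your proposal is correct and follows essentially the same approach as the paper: form the minimal common refinement of the $\Sigma_i$, observe it is $G$-stable via \cref{prop:ExtendingEquivariance}(1), and then take its barycentric subdivision with barycenters given by sums of primitive ray generators. You in fact supply more detail than the paper does on why the barycentric subdivision is simplicial and genuinely refines $\Sigma'$.
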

\begin{proof}
We will construct this fan in two steps.  First we take the minimal common refinement $\Sigma_{ref}$ of the $\Sigma_i$.  This is the fan whose cones $\sigma\in \Sigma_{ref}$ are of the form $\sigma= \sigma_1\cap\sigma_2 \cap \cdots \cap \sigma_r$ where $\sigma_i$ is a cone of $\Sigma_i$; see~\cite[Definition 7.6]{ziegler} or \cite[p. 742]{CLSToricVarieties} for details on this construction.   
Since $g\in G$ induces a map of fans $g\colon \Sigma_i\to \Sigma_{g(i)}$, it follows that $g$ sends each cone $\sigma_i$ of $\Sigma_i$ to a cone $g(\sigma_i)$ in $\Sigma_{g(i)}$.  We thus have a natural $G$-action on the cones of $\Sigma_{ref}$ where
\[
g(\sigma) = g(\sigma_1) \cap \cdots \cap g(\sigma_r).
\]

However, $\Sigma_{ref}$ may fail to be simplicial.  So for the second step, we let $\widetilde{\Sigma}$ be the barycentric subdivision of $\Sigma_{ref}$, with barycenters chosen to be the sums of primitive generators.
Specifically, for each cone $\sigma \in \Sigma_{ref}$, let $\rho_\sigma$ be the ray generated (over the nonnegative rational numbers) by $\sum_{\rho \in \sigma} u_\rho$. Then
\[
\widetilde{\Sigma} = \{ \{\rho_{\sigma_1}, \rho_{\sigma_2}, \dots, \rho_{\sigma_k}\} : \{0\} \subsetneq \sigma_1 \subsetneq \sigma_2 \subsetneq \dots \subsetneq \sigma_k \in \Sigma_{ref} \}.
\]

Finally, since $G$ naturally acts on cones of $\Sigma_{ref}$ and preserves inclusions, $G$ also naturally acts on chains of cones.   This induces the $G$ action on $\widetilde{\Sigma}$ as desired.
\end{proof}

Next we turn to the stacky structure.  Instead of considering only the fans $\Sigma_i$, we now consider the pairs $(\Sigma_i, \beta_i)$ where each $\beta_i$ is the map $\ZZ^{\Sigma_i(1)}\to N$ that sends each basis element $e_\rho$ to the primitive point $u_\rho$ on the corresponding ray $\rho$. 

\begin{lemma}\label{lem:equivariantRefinementStacky}
With notation as above, we can extend the simplicial fan $\widetilde{\Sigma}$ to a stacky fan $(\widetilde{\Sigma}, \widetilde{\beta})$ with the following properties.
\begin{enumerate} 
\item The identity on $N$ lifts to a map of stacky fans $(\widetilde{\Sigma}, \widetilde{\beta}) \to (\Sigma_i,\beta_i)$ for all $1\leq i \leq r$.
\item The $G$-action on $N$ is contained in the automorphism group of the stacky fan $(\widetilde{\Sigma}, \widetilde{\beta})$.
\end{enumerate}
\end{lemma}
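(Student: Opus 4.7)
The plan is to define $\widetilde{\beta} \colon \ZZ^{\widetilde{\Sigma}(1)} \to N$ by $\widetilde{\beta}(e_\rho) = b_\rho u_\rho$ for positive integers $b_\rho$ that are $G$-invariant in $\rho$ and large enough to simultaneously clear all denominators that arise when $u_\rho$ is expressed in terms of primitive generators of each $\Sigma_i$. Both conditions in the lemma will then fall out of a direct compatibility check.

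First I would observe that because $\widetilde{\Sigma}$ refines each $\Sigma_i$ and each $\Sigma_i$ is simplicial, every ray $\rho \in \widetilde{\Sigma}(1)$ lies in the relative interior of a unique minimal cone $\tau_\rho^{(i)} \in \Sigma_i$, and hence $u_\rho = \sum_{\rho' \in \tau_\rho^{(i)}(1)} c_{\rho',\rho}^{(i)} u_{\rho'}$ for unique $c_{\rho',\rho}^{(i)} \in \QQ_{\geq 0}$. I would let $b_\rho$ be a common denominator of all of these coefficients as $i$ and $\rho'$ vary. The key equivariance observation is that by \cref{prop:ExtendingEquivariance}(1) and \cref{lem:equivariantRefinement}(2), each $g \in G$ sends $\tau_\rho^{(i)}$ to $\tau_{g(\rho)}^{(g(i))}$ and carries the primitive generators $u_{\rho'}$ to $u_{g(\rho')}$, which forces $c_{\rho',\rho}^{(i)} = c_{g(\rho'),g(\rho)}^{(g(i))}$. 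Consequently $b_\rho$ may be chosen to depend only on the $G$-orbit of $\rho$, and I will take it to be constant on each such orbit.

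Setting $\widetilde{\beta}(e_\rho) = b_\rho u_\rho$, I would verify condition (1) by exhibiting the forced lift
\[
\widehat{f}_i(e_\rho) = \sum_{\rho' \in \tau_\rho^{(i)}(1)} b_\rho c_{\rho',\rho}^{(i)} e_{\rho'},
\]
which is integral by the choice of $b_\rho$ and satisfies $\beta_i \circ \widehat{f}_i = \widetilde{\beta}$ by construction, so it lifts $\mathrm{id}_N$ to a map of stacky fans. Condition (2) then follows by lifting each $g \in G$ to the permutation $\widehat{g}(e_\rho) = e_{g(\rho)}$: since $b_{g(\rho)} = b_\rho$ and $u_{g(\rho)} = g(u_\rho)$, the compatibility $\widetilde{\beta} \circ \widehat{g} = g \circ \widetilde{\beta}$ is immediate, and $\widehat{g}$ respects the combinatorics of $\widetilde{\Sigma}$ by \cref{lem:equivariantRefinement}(2). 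The argument is essentially bookkeeping once the $G$-equivariant refinement of \cref{lem:equivariantRefinement} is available; the only point that requires any care is the $G$-invariance of the denominator data $b_\rho$, and this is automatic from the combinatorial compatibility of the $G$-action with the refinement.
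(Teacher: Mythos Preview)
Your proof is correct and follows essentially the same approach as the paper: define $\widetilde{\beta}(e_\rho)=b_\rho u_\rho$, pick $b_\rho$ large enough to clear the denominators of the coefficients expressing $u_\rho$ in terms of the primitive ray generators of each $\Sigma_i$, and arrange $b_\rho$ to be constant on $G$-orbits so that $G$ lifts to an automorphism of the stacky fan. The only cosmetic difference is in how $G$-invariance of $b_\rho$ is obtained: the paper forces it by taking a product (or lcm) of the denominator data over the $G$-orbit, whereas you observe directly that the coefficient sets $\{c^{(i)}_{\rho',\rho}\}$ and $\{c^{(g(i))}_{g(\rho'),g(\rho)}\}$ coincide, so any canonical common denominator is already $G$-invariant---a slightly cleaner justification of the same point.
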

\begin{proof}
We will define $\widetilde{\beta}$ by determining $b_\rho \in \ZZ_{>0}$ such that $\widetilde{\beta}(e_\rho) = b_\rho u_\rho$ for all $\rho \in \widetilde{\Sigma}(1)$. 

The construction of a $\widetilde{\beta}$ satisfying the first property is carried out in \cite[Section 3]{king}.
We will now recall this construction and give general constraints on the $b_\rho$ before addressing the $G$ action.
For each $i$ and $\rho \in \widetilde{\Sigma}(1)$, there is a unique minimal cone $\sigma_{\rho, i} \in \Sigma_i$ containing $\rho$.
As $\Sigma_i$ is simplicial, we can write 
\[ u_{\rho} = \sum_{\tau \in \sigma_{\rho, i}(1)} c_\tau u_\tau\]
where $c_\tau \in \QQ_{>0}$. Clearing denominators, we obtain
\[ a_{\rho,i} u_\rho = \sum_{\tau \in \sigma_{\rho, i}(1)} a_\tau u_\tau\]
where $a_{\rho, i}, a_\tau \in \ZZ_{>0}$ and $\gcd(a_{\rho,i}, a_\tau) = 1$ for all $\tau$.  
We conclude that there is a map of stacky fans $(\widetilde{\Sigma}, \widetilde{\beta}) \to (\Sigma_i,\beta_i)$ as long as $b_\rho$ is a multiple of $a_{\rho, i}$ for all $\rho \in \widetilde{\Sigma}(1)$. 
Thus, we satisfy the first property if (and only if) $b_\rho$ is a multiple of $\mathrm{lcm}_{1 \leq i \leq r}(a_{\rho,i})$ for all $\rho \in \widetilde{\Sigma}(1)$. 

Now, we turn to the $G$-action. Since every $g \in G$ is an automorphism of $\widetilde{\Sigma}$, we have that $g$ sends every ray $\rho \in \widetilde{\Sigma}(1)$ to another ray $g(\rho)$. 
Thus, $g$ induces a map of stacky fans if and only if $b_\rho$ is a multiple of $b_{g(\rho)}$ for all $g \in G$.

Since $G$ is finite, it is possible to satisfy all these conditions. For instance, we can set
\[ b_\rho = \prod_{g \in G} \underset{1 \leq i \leq r}{\mathrm{lcm}} (a_{g(\rho), i}) \]
for all $\rho \in \widetilde{\Sigma}(1)$ (or we can again take a least common multiple over $G$ if one wishes to minimize the $b_\rho$).
\end{proof}

\begin{proof}[Proof of Proposition~\ref{prop:ExtendingEquivariance}(2)]
    This follows immediately from Lemmas~\ref{lem:equivariantRefinement} and \ref{lem:equivariantRefinementStacky}.
\end{proof}

Finally, we observe that the equivariant structures are compatible.
\begin{proof}[Proof of Proposition~\ref{prop:ExtendingEquivariance}(3)]
Referring to \eqref{eq:morhpismdiag}:  the underlying maps of lattices commute, since the vertical arrows correspond to the identity and the horizontal arrows both correspond to $g$.   Thus the corresponding toric morphisms agree as well.
 \end{proof}

\section{Proof of Theorem~\ref{thm:main}}\label{sec:proofOfMain}

In this section, we prove \cref{thm:main} after some preparatory lemmas. 
When $\tsX$ satisfies the conclusions of \cref{prop:ExtendingEquivariance}, we will write $G \subseteq \mathrm{Aut}(\tsX)$ as a slight abuse of notation. 

\begin{lemma}\label{lem:GactCox}
    If $G \subseteq \mathrm{Aut}(\tsX)$, then $D_{\Cox}(X)$ is closed under the induced action of $G$ on $D(\tsX)$. Moreover, the induced action\footnote{This is a strict action in the sense of \cite{shinderaction} as automorphisms act strictly on the derived category.} on $D_{\Cox}(X)$ is independent of the choice of $\tsX$ with $G \subseteq \mathrm{Aut}(\tsX)$ and is given by 
    \begin{equation} \label{eq:generatorsaction}
    g\cdot \pi_i^* \cE = \pi_{g(i)}^* \phi_{g\ast}\cE.
    \end{equation}
    for $\cE \in D(\sX_i)$.
\end{lemma}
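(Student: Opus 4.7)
The plan is to verify the formula \eqref{eq:generatorsaction} directly on the generating set $\{\pi_i^*\cE\}_{1 \leq i \leq r,\, \cE \in D(\sX_i)}$; this simultaneously yields closure of $D_{\Cox}(X)$ under the $G$-action on $D(\tsX)$ and pins the induced action down intrinsically, at which point independence of $\tsX$ becomes a bookkeeping check.

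First I would describe the $G$-action on $D(\tsX)$ concretely as $g \cdot F \coloneq \tilde{\phi}_{g*}F = (\tilde{\phi}_g^{-1})^*F$, which is a strict action because $\tilde{\phi}_{gh} = \tilde{\phi}_g \circ \tilde{\phi}_h$ on $\tsX$ together with the footnoted convention that automorphisms act strictly on the derived category. Using the commutative square \eqref{eq:morhpismdiag} of \cref{prop:ExtendingEquivariance}(3) to write $\pi_{g(i)} = \phi_g \circ \pi_i \circ \tilde{\phi}_g^{-1}$, and then invoking $\phi_g^*\phi_{g*} \cong \mathrm{id}$ for the isomorphism $\phi_g$, one computes
\[ \pi_{g(i)}^*\phi_{g*}\cE \;=\; (\tilde{\phi}_g^{-1})^*\pi_i^*\phi_g^*\phi_{g*}\cE \;\cong\; (\tilde{\phi}_g^{-1})^*\pi_i^*\cE \;=\; \tilde{\phi}_{g*}\pi_i^*\cE \;=\; g \cdot \pi_i^*\cE. \]
This verifies \eqref{eq:generatorsaction} on generators, and closure follows at once: $D_{\Cox}(X)$ is by definition the triangulated hull of $\bigcup_i \pi_i^* D(\sX_i)$ inside $D(\tsX)$, and the autoequivalence $\tilde{\phi}_{g*}$ sends each generator $\pi_i^*\cE$ to the generator $\pi_{g(i)}^*(\phi_{g*}\cE)$, hence preserves the hull.

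For independence of $\tsX$, my plan is to pass to a common $G$-equivariant refinement $\tsX''$ dominating both $\tsX$ and a second choice $\tsX'$; such a refinement is produced by running Lemmas~\ref{lem:equivariantRefinement}--\ref{lem:equivariantRefinementStacky} on the combined ray data. Under the canonical equivalence $D_{\Cox}(X) \subseteq D(\tsX) \simeq D_{\Cox}(X) \subseteq D(\tsX'')$ from \cite[Corollary 3.9]{king}, the generator $\pi_i^*\cE$ is matched with $(\pi_i'')^*\cE$, and similarly for $\tsX'$. Since the above computation shows that all three induced actions are determined on these generators by the same intrinsic formula \eqref{eq:generatorsaction}, depending only on the data $\phi_g$ and the embeddings $\pi_i^*$, they must coincide on $D_{\Cox}(X)$. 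The one point I expect to require care is confirming that strictness of the action survives transport along the equivalence of \cite[Corollary 3.9]{king}, but this follows from the canonical, and hence $G$-compatible, nature of that equivalence.
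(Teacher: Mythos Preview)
Your proposal is correct and follows essentially the same route as the paper: use the commutative square \eqref{eq:morhpismdiag} from \cref{prop:ExtendingEquivariance}(3) to verify \eqref{eq:generatorsaction} on generators, then conclude closure because $\tilde{\phi}_{g*}$ is a triangulated autoequivalence preserving the generating set. Your treatment is in fact more explicit than the paper's: you unpack the functorial identity $\pi_{g(i)}^*\phi_{g*} \cong \tilde{\phi}_{g*}\pi_i^*$ line by line, and you address the independence-of-$\tsX$ claim by passing to a common $G$-equivariant refinement, whereas the paper's proof leaves independence implicit in the intrinsic form of \eqref{eq:generatorsaction}.
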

\begin{proof}
   By Proposition~\ref{prop:ExtendingEquivariance}(3), we have that $\tilde{\phi}_{g \ast} \pi_i^* D(\sX_i) \subseteq \pi_{g(i)}^* D(\sX_{g(i)})$.
    In particular, this implies that the subcategory of $D(\tsX)$ consisting of all the $\pi_i^*D(\sX_i)$ is closed under the $G$-action. Since the $\tilde{\phi}_{g\ast}$ are triangulated functors, the triangulated hull $D_{\Cox}(X)$ is closed under the $G$-action as well.
    The formula \eqref{eq:generatorsaction} for the action on pullbacks also follows immediately from commutativity of \eqref{eq:morhpismdiag}.
\end{proof}

\begin{lemma}\label{lem:GTheta}
    $\Theta$ is invariant under the $G$-action on $D_{\Cox}(X)$.
\end{lemma}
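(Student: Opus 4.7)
My plan is to show directly that the $G$-action from \cref{lem:GactCox} permutes the Bondal-Thomsen line bundles by identifying the action on the parameter $\theta \in M_\RR$. Specifically, I expect that for every $\theta \in M_\RR$ and every $g \in G$,
\[
g \cdot \cO_{\Cox}(-d_\theta) \;=\; \cO_{\Cox}(-d_{g_\ast \theta}),
\]
where $g_\ast \colon M_\RR \to M_\RR$ is the dual of $g^{-1}$, exactly the action that already appeared in the proof of \cref{prop:ExtendingEquivariance}(1). This is a set-level statement, so invariance of $\Theta$ will follow immediately.

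First I would fix a $\theta \in M_\RR$ and choose a chamber $\Gamma_i$ whose closure contains $d_\theta$, so that by \eqref{eq:BTdef} we have $\cO_{\Cox}(-d_\theta) = \pi_i^\ast \cO_{\sX_i}(-d_\theta)$. By \cref{lem:GactCox},
\[
g \cdot \cO_{\Cox}(-d_\theta) \;=\; \pi_{g(i)}^\ast \, \phi_{g\ast} \cO_{\sX_i}(-d_\theta).
\]
The map $\phi_g \colon \sX_i \to \sX_{g(i)}$ is a toric isomorphism induced by $g \in \Aut(N)$, so pushforward of a torus-invariant line bundle along $\phi_g$ simply relabels torus-invariant divisors according to the bijection $\rho \mapsto g(\rho)$ on rays. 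Hence the coefficient of $D_{\rho'}$ in the divisor associated to $\phi_{g\ast}\cO_{\sX_i}(-d_\theta)$ on $\sX_{g(i)}$ is $\lfloor -\langle \theta, u_{g^{-1}(\rho')}\rangle \rfloor = \lfloor -\langle g_\ast\theta, u_{\rho'}\rangle\rfloor$, where the second equality uses $\langle g_\ast\theta, n\rangle = \langle \theta, g^{-1}n\rangle$ and the fact that $g$ sends primitive ray generators to primitive ray generators.

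Putting these together yields $\phi_{g\ast}\cO_{\sX_i}(-d_\theta) = \cO_{\sX_{g(i)}}(-d_{g_\ast\theta})$, and consistency of the $G$-action on chambers (\cref{prop:ExtendingEquivariance}(1) together with \cref{rmk:GactiononSet}) ensures that $d_{g_\ast\theta}$ lies in the closure of $\Gamma_{g(i)}$, so the right-hand side is exactly $\cO_{\Cox}(-d_{g_\ast \theta})$ via the defining formula \eqref{eq:BTdef}. Since $g_\ast\theta$ ranges over all of $M_\RR$ as $\theta$ does, the collection $\Theta$ is permuted by $G$.

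The only subtle point I anticipate is bookkeeping: making sure the dual action $g_\ast$ enters on the correct side so that the floor-function formula for $-d_\theta$ transforms cleanly, and confirming that the chamber containing $d_{g_\ast\theta}$ is indeed $\Gamma_{g(i)}$ (rather than some other chamber on whose boundary $d_\theta$ may sit). Both are settled by the computation already carried out in the proof of \cref{prop:ExtendingEquivariance}(1), which shows that the image under $g$ of the cone $\widetilde{\Gamma}_{\overline{\Sigma},I_\emptyset}$ is obtained precisely by this relabeling, so I do not expect any real obstacle beyond careful notation.
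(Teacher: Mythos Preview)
Your proposal is correct and follows essentially the same route as the paper: the paper also proves $g\cdot\cO_{\Cox}(-d_\theta)=\cO_{\Cox}(-d_{g_\ast\theta})$ by applying \cref{lem:GactCox}, computing $\phi_{g\ast}\cO_{\sX_i}(-d_\theta)=\cO_{\sX_{g(i)}}(-d_{g_\ast\theta})$ via the same coefficient calculation, and then checking that $d_{g_\ast\theta}$ is nef on $\sX_{g(i)}$. The only cosmetic difference is that the paper verifies this last step directly by noting the support function of $d_{g_\ast\theta}$ is $F\circ g^{-1}$ (concave on $\Sigma_{g(i)}$), whereas you appeal to the chamber action from \cref{prop:ExtendingEquivariance}(1); these are the same computation in different clothing.
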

\begin{proof}
    We will show that $g \cdot \cO_{\Cox} (-d_\theta) = \cO_{\Cox} (-d_{g_*\theta})$ for any $\theta \in M_\RR$ where $g_* \in \mathrm{Aut}(M)$ is the dual to $g^{-1}$. 
    By \cref{lem:GactCox}, we have that $g \cdot \cO_{\Cox} (-d_\theta) = \pi_{g(i)}^* \phi_{g \ast}\cO_{\sX_i}(-d_\theta)$ where $d_\theta$ has image in $\Gamma_i$. 
    Now, we note that
    \[ \phi_{g\ast}\cO_{\sX_i}(-d_\theta) = (\phi_g^{-1})^* \cO_{\sX_i}(-d_\theta). \]
    Thus, $\phi_{g\ast}\cO_{\sX_i}(-d_\theta)$ is the line bundle on $\sX_{g(i)}$ corresponding to the toric divisor $\sum e_\rho D_\rho$ where
    \[ e_\rho = (-d_\theta)_{g^{-1}(\rho)} = \lfloor - \langle \theta, g^{-1}(u_\rho) \rangle \rfloor = \lfloor - \langle g_* \theta, u_\rho \rangle \rfloor \]
    for all $\rho \in \Sigma_{g(i)}(1)$. 
    That is, $\phi_{g\ast} \cO_{\sX_i}(-d_\theta) = \cO_{\sX_{g(i)}} (-d_{g_* \theta})$. 
    
    It remains only to show that $\cO_{\Cox}(-d_{g_* \theta}) = \pi_{g(i)}^*\cO_{\sX_{g(i)}}(-d_{g_* \theta})$, that is, $d_{g_*\theta}$ is nef on $\sX_{g(i)}$. To see this, observe that the support function for $d_{g_* \theta}$ is given by $F \circ g^{-1}$ where $F$ is the support function for $d_\theta$. In particular, $F \circ g^{-1}$ is a concave support function on $\Sigma_{g(i)}$ as $F$ is a concave support function on $\Sigma_i$ and $g^{-1}$ is linear.
\end{proof}

We can now prove the following generalization of Theorem~\ref{thm:main}:
\begin{thm}\label{thm:generalMain}
    Let $X$ be a semiprojective normal toric variety.  The Cox category $D_{\Cox}(X)$ of $X$ has a $G$-invariant tilting bundle $\oplus_{-d\in \Theta} \cO_{\Cox}(-d)$.
    
    If $X$ is projective, then $\Theta$ forms a $G$-invariant full strong exceptional collection with respect to any ordering that refines the effective partial order.
\end{thm}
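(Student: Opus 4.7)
The plan is to assemble the theorem by combining results already established (both in the present paper and in \cite{king}). The statement has two logically separable parts: a non-equivariant structural part (tilting bundle / full strong exceptional collection) and the $G$-invariance of the collection. The former is cited directly from \cite{king}, while the latter is an essentially immediate consequence of \cref{lem:GactCox} and \cref{lem:GTheta}.

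First, I would recall that the main results of \cite{king} already establish that $T \coloneqq \bigoplus_{-d \in \Theta} \cO_{\Cox}(-d)$ is a tilting bundle for $D_{\Cox}(X)$ when $X$ is semiprojective and normal, and that when $X$ is projective, $\Theta$ forms a full strong exceptional collection of line bundles with respect to any ordering refining the effective partial order on $\Cl(X)$. This disposes entirely of the non-equivariant claims, so all that remains is to verify $G$-invariance.

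Second, for the $G$-invariance, I would invoke \cref{prop:ExtendingEquivariance} to choose a smooth toric DM stack $\tsX$ with $G \subseteq \Aut(\tsX)$ compatible with all the maps $\pi_i \colon \tsX \to \sX_i$. By \cref{lem:GactCox}, this yields a canonical $G$-action on $D_{\Cox}(X)$ that is independent of the choice of $\tsX$. By \cref{lem:GTheta}, this action permutes the set $\{ \cO_{\Cox}(-d) : -d \in \Theta \}$; explicitly, $g \cdot \cO_{\Cox}(-d_\theta) = \cO_{\Cox}(-d_{g_* \theta})$. Consequently, the $G$-action sends $T$ to a direct sum with the same summands (merely reindexed), so $T$ is $G$-invariant, and the set $\Theta$ is permuted by $G$, which is precisely what the projective statement requires.

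I do not anticipate any significant obstacle in this final step, as all of the real work has been carried out in the preceding lemmas. The one small subtlety to note is that the phrase \emph{$G$-invariant full strong exceptional collection} refers to set-theoretic invariance of the underlying collection of line bundles: the ordering used to witness the exceptional structure need not itself be $G$-equivariant, since any refinement of the effective partial order produces the same strong exceptional collection (because $G$ preserves the rays of $\Sigma$, and hence preserves the effective cone, so the effective partial order is at least set-theoretically $G$-equivariant). Thus the proof reduces to assembling the ingredients already in place.
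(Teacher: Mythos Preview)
Your proposal is correct and follows essentially the same approach as the paper: cite \cite[Theorem A]{king} for the tilting bundle / full strong exceptional collection structure, and invoke \cref{lem:GTheta} (together with the setup from \cref{prop:ExtendingEquivariance} and \cref{lem:GactCox}) for the $G$-invariance. The paper's proof is more terse but the content is identical.
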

\begin{proof}
    In the semiprojective case, \cite[Theorem A]{king} shows that $\oplus_{-d\in \Theta} \cO_{\Cox}(-d)$ is a tilting bundle for $D_{\Cox}(X)$ and \cref{lem:GTheta} shows that it is $G$-invariant.   In the projective case, the statement also follows from \cite[Theorem A]{king} and \cref{lem:GTheta}.
\end{proof}

The following implies \cref{cor:summand}.
\begin{cor} \label{cor:generalcorB}
    Let $X$ be a semiprojective, simplicial toric variety with associated stack $\sX$. The Grothendieck group $K_0(\sX)$ is a summand of the permutation $G$-module $K_{\Cox}(X)$.
\end{cor}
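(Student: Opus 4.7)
The plan is to realize $K_0(\sX)$ as a direct summand of $K_{\Cox}(X)$ by directly combining the structural description of $D_{\Cox}(X)$ recalled in \cref{sec:background} with \cref{thm:generalMain}. First I would identify $\sX$ with one of the stacks $\sX_i$ already appearing in the definition of the Cox category. Since $X$ is semiprojective and simplicial, it is the toric variety corresponding to one of the maximal chambers $\Gamma_{i_0}$ of the secondary fan $\Sigma_{GKZ}(X)$, namely the chamber whose simplicial fan $\Sigma_{i_0}$ equals $\Sigma$. The corresponding stacky fan $(\Sigma_{i_0}, \beta_{i_0})$ with $\beta_{i_0}(e_\rho) = u_\rho$ is precisely the canonical smooth DM stack of the simplicial toric variety $X$, so $\sX = \sX_{i_0}$.

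Next I would invoke the fact recalled in \cref{sec:background} that $\pi_{i_0}^{*}\colon D(\sX_{i_0}) \to D_{\Cox}(X)$ is fully faithful with adjoint $\pi_{i_0 *}$ a localization, which gives $\pi_{i_0 *} \circ \pi_{i_0}^{*} \simeq \Id$. Passing to Grothendieck groups yields an injection $K_0(\sX) \hookrightarrow K_{\Cox}(X)$ split by $[\pi_{i_0 *}]$, so $K_0(\sX)$ is a direct summand of $K_{\Cox}(X)$. By \cref{thm:generalMain}, the classes of the Bondal-Thomsen collection form a $G$-invariant basis of $K_{\Cox}(X)$, making it a permutation $G$-module. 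Combining these two statements yields the claim.

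I do not anticipate a significant obstacle: the argument is essentially an unwinding of definitions once \cref{thm:generalMain} is in hand. The only point deserving a moment's attention is the identification $\sX = \sX_{i_0}$, which uses the semiprojectivity of $X$ to place its chamber among the \emph{maximal} chambers of the secondary fan, so that $\pi_{i_0}$ is one of the morphisms built into the construction of $D_{\Cox}(X)$.
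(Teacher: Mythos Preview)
Your proposal is correct and follows essentially the same approach as the paper's proof. The paper's argument is simply that the fully faithful embedding $\pi^*\colon D(\sX)\to D_{\Cox}(X)$ together with its adjoint $\pi_*$ yields $K_0(\sX)$ as a split summand of $K_{\Cox}(X)$; your version merely makes explicit the identification $\sX = \sX_{i_0}$ and the appeal to \cref{thm:generalMain} for the permutation-module structure.
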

\begin{proof}
    Write $\pi^*\colon D(\sX)\to D_{\Cox}(X)$ and $\pi_*\colon D_{\Cox}(X)\to D(\sX)$.  We have that $\pi^*$ is a fully faithful embedding, with adjoint $\pi_*$.  In particular, the induced maps on $K_0$ realize $K_0(\sX)$ as a split summand of $K_0(D_{Cox}(X))$. 
\end{proof}

\section{Further invariant structures}\label{sec:furtherInvariant}

In this section, we will examine some further structures preserved by the $G$-action that are a consequence of viewing the Bondal-Thomsen elements as corresponding to subsets of the torus $M_\RR/M$. 
Namely, we saw in the proof of \cref{lem:GTheta} that $g \in G$ acts on the Cox category by taking $\cO_{\Cox}(-d_\theta)$ to $\cO_{\Cox}(-d_{g_* \theta})$ where $g_* \in \Aut(M)$ is the dual to $g^{-1}$.
We can extend this as follows.
Suppose we have $-d \in \Theta$ and we choose $\theta \in M_\RR$ such that $-d = -d_\theta$. 
Then, we have a lift of $-d_\theta$ to $\ZZ^{\Sigma(1)}$ given by \eqref{eq:BTdef}. 
We then obtain the set 
\[ S_\theta = \left\{ k \in M_\RR \mid \lceil \langle \theta, u_\rho \rangle \rceil - 1 < \langle k, u_\rho \rangle \leq \lceil \langle \theta, u_\rho \rangle \rceil \text{ for all } \rho \in \Sigma(1) \right\}  \]
whose projection to $M_\RR/M$ is the Bondal-Thomsen stratum of $-d_\theta$, that is, the set of points in $M_\RR/M$ that correspond $-d_\theta$ under the assignment \eqref{eq:BTdef}. 
The Bondal-Thomsen strata are the strata of the oriented Bondal stratification which is given by the dual hyperplanes to $\Sigma(1)$.
We also consider the polytope
\[ P_\theta = \left\{ k \in M_\RR \mid \langle k, u_\rho \rangle \leq \lceil \langle \theta, u_\rho \rangle \rceil \text{ for all } \rho \in \Sigma(1) \right\}. \]
Note that $P_\theta = - P_{d_\theta}$ where $d_\theta$ is the section polytope of the effective divisor $d_\theta$.
Also, observe that $S_\theta \subseteq P_\theta$ is relatively open. 
The polytopes of the form $P_\theta$ govern the morphisms in $D_{\Cox}(X)$ by \cite[Corollary 4.25]{king}. 

Clearly, $g_*$ acts on the sets of $S_\theta$ and $P_\theta$, which can be useful in understanding and visualizing the orbits of the $G$ action on $\Theta$. In fact, since $g_* \in \Aut(M)$, $g_*$ acts on the equivalence classes of these strata and polytopes up to translation by $M$ and descends to an action on the torus $M_\RR/M$. 

We further set 
\[ J_\theta = \left\{ \rho \in \Sigma(1) \mid \langle k , u_\rho \rangle = \lceil \langle \theta, u_\rho \rangle \rceil \text{ for all } k \in S_\theta \right\}  \]
Note that $J_\theta$ depends only on $S_\theta$ up to translation by $M$ and that one could replace $S_\theta$ with $P_\theta$ in the definition of $J_\theta$ if desired as $P_\theta$ is connected.

\begin{example} \label{ex:P2}
    Suppose that $\Sigma(1) = \{ e_1, e_2, -e_1 - e_2 \}$ so that we are studying $D_{\Cox}(\PP^2) = D(\PP^2)$.
    In this case, $G$ is isomorphic to $S_3$ where the action is given by the toric automorphisms permuting the homogeneous coordinates on $\PP^2$. 
    The Bondal-Thomsen collection coincides with the Beilinson collection $\cO(-2), \cO(-1), \cO$.
    In this example, the $G$ action on $\Theta$ fixes all three of these line bundles.
    Nevertheless, we will take advantage of this example to illustrate the definitions given thus far in this section.
    
    Namely, $\cO$ corresponds to $\theta_0 = (0,0)$. We have $S_{\theta_0} = P_{\theta_0} = \{ (0,0) \}$ and $J_{\theta_0} = \Sigma(1)$.
    In addition, $\cO(-1)$ corresponds to $\theta_1 = (-1/3, -1/3)$ among many other possible choices.
    We then have $S_{\theta_1} = P_{\theta_1} = \{ (k_1, k_2) \mid k_1, k_2 \leq 0 \text{ and } k_1 + k_2 \geq -1 \}$ and $J_\theta = \emptyset$. 
    Similarly, $\cO(-2)$ corresponds to $\theta_2 = (-2/3, -2/3)$, for example. In this case, we again have $J_\theta =\emptyset$ while $S_{\theta_2} = \{ (k_1, k_2) \mid k_1, k_2 > -1 \text{ and } k_1 + k_2 < -1\}$ differs from $P_{\theta_2} = \{ (k_1, k_2) \mid k_1, k_2 \leq 0 \text{ and } k_1 + k_2 \geq -2 \}$.
    See \Cref{fig:stratanadpolytopesP2}.
\end{example}

\begin{figure}
    \begin{center}
    \begin{tikzpicture}[decoration=border]
    \begin{scope}[scale=3]

    \draw [thick, postaction={draw,decorate, black, decoration={border, amplitude=0.09cm, angle=90, segment length = .25cm}}]
         (0,2) -- (0,0) -- (2,0);
    \draw [thick, postaction={draw,decorate, black, decoration={border, amplitude=0.09cm, angle=90, segment length = .25cm}}]
         (0,2) -- (2,2) -- (2,0);
    \draw [thick, postaction={draw,decorate, black, decoration={border, amplitude=0.09cm, angle=90, segment length = .25cm}}]
        (0,1) -- (2,1);
    \draw [thick, postaction={draw,decorate, black, decoration={border, amplitude=0.09cm, angle=90, segment length = .25cm}}]
        (1,2) -- (1,0);
    \draw [thick, postaction={draw,decorate, black, decoration={border, amplitude=0.09cm, angle=90, segment length = .25cm}}]
        (2,0) -- (0,2);
    \draw [thick, postaction={draw,decorate, black, decoration={border, amplitude=0.09cm, angle=90, segment length = .25cm}}]
        (1,0) -- (0,1);
    \draw [thick, postaction={draw,decorate, black, decoration={border, amplitude=0.09cm, angle=90, segment length = .25cm}}]
        (2,1) -- (1,2);

    \node[label=right:{$\theta_1$}] at (1.66,1.66) {$\bullet$};
    \draw[ultra thick, blue, fill, fill opacity= 0.2] 
        (2,2) -- (1,2) -- (2,1) -- (2,2);
    \node[label=above:{\color{blue} $S_{\theta_1} = P_{\theta_1}$}] at (1.5, 2) { };

    \begin{scope}[shift = {(3,0)}]
    \draw [thick, postaction={draw,decorate, black, decoration={border, amplitude=0.09cm, angle=90, segment length = .25cm}}]
         (0,2) -- (0,0) -- (2,0);
    \draw [thick, postaction={draw,decorate, black, decoration={border, amplitude=0.09cm, angle=90, segment length = .25cm}}]
         (0,2) -- (2,2) -- (2,0);
    \draw [thick, postaction={draw,decorate, black, decoration={border, amplitude=0.09cm, angle=90, segment length = .25cm}}]
        (0,1) -- (2,1);
    \draw [thick, postaction={draw,decorate, black, decoration={border, amplitude=0.09cm, angle=90, segment length = .25cm}}]
        (1,2) -- (1,0);
    \draw [thick, postaction={draw,decorate, black, decoration={border, amplitude=0.09cm, angle=90, segment length = .25cm}}]
        (2,0) -- (0,2);
    \draw [thick, postaction={draw,decorate, black, decoration={border, amplitude=0.09cm, angle=90, segment length = .25cm}}]
        (1,0) -- (0,1);
    \draw [thick, postaction={draw,decorate, black, decoration={border, amplitude=0.09cm, angle=90, segment length = .25cm}}]
        (2,1) -- (1,2);

    \draw[ultra thick, blue, fill, fill opacity= 0.2] 
        (0,2) -- (2,0) -- (2,2) -- (0,2);
    \draw[ultra thick, red, dashed, fill, fill opacity= 0.4] 
        (1,1) -- (1,2) -- (2,1) -- (1,1);
    \node[label=left:{$\theta_2$}] at (1.33,1.33) {$\bullet$};
    \node[label=above:{\color{blue} $P_{\theta_2}$}] at (1.5, 2) { };
    \node[label=right:{\color{red} $S_{\theta_2}$}] at (1.5, 1.5) { };
    \end{scope}
    
    \end{scope}
    \end{tikzpicture}
    \end{center}
    \caption{Examples of $S_\theta$ and $P_\theta$ drawn on the periodic hyperplane arrangement on $M_\RR$ that lifts the Bondal stratification for $\PP^2$ with $\theta_1$ and $\theta_2$ corresponding to $\cO(-1)$ and $\cO(-2)$, respectively.}
    \label{fig:stratanadpolytopesP2}
\end{figure}
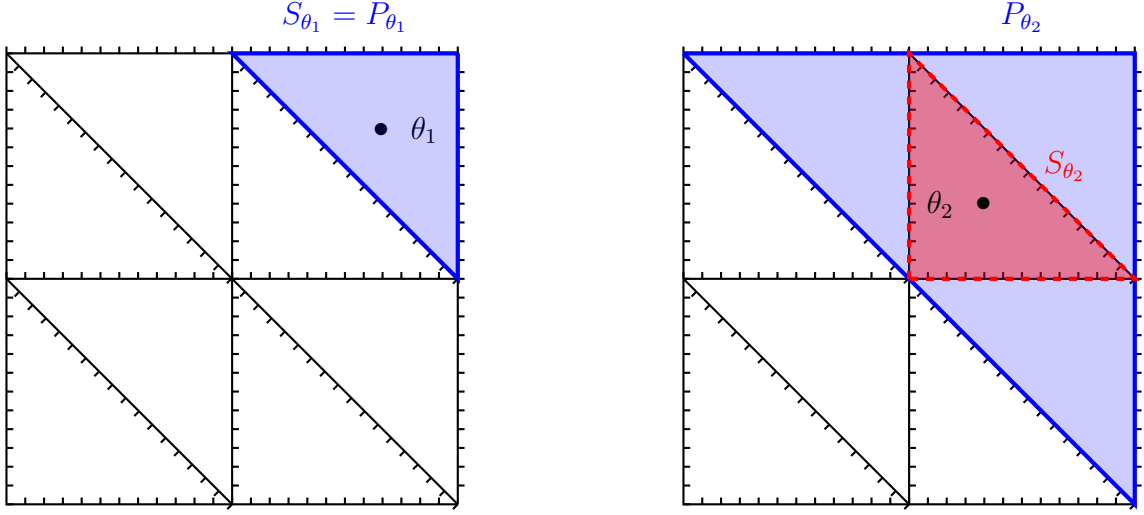

The following statement is hinted at in \cite{bauermeister2025strata} where the dimenson of $S_\theta$ (or equivalently the dimension of $P_\theta$) is related to the dimension of the minimal face of the zonotope in $\Cl(X)$ containing the image of $d_\theta$ and the size of $J_\theta$.

\begin{lemma} \label{lem:orthoJ}
    If 
    \begin{equation} \label{eq:nonzerohom}
    \Hom_{D_{\Cox}(X)} (\cO_{Cox}(-d_\theta), \cO_{Cox}(-d_{\theta'})) \neq 0, 
    \end{equation}
    then $J_{\theta} \subseteq J_{\theta'}$.
\end{lemma}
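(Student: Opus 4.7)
The plan is to convert the nonvanishing of $\Hom$ into a concrete containment of polytopes, and then to compare the polytopal characterizations of $J_\theta$ and $J_{\theta'}$.

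First I would use \cite[Corollary 4.25]{king} to extract from a nonzero morphism $\cO_{\Cox}(-d_\theta) \to \cO_{\Cox}(-d_{\theta'})$ a lattice point $m \in M$ satisfying $\langle m, u_\rho \rangle \geq \lceil \langle \theta', u_\rho \rangle \rceil - \lceil \langle \theta, u_\rho \rangle \rceil$ for every $\rho \in \Sigma(1)$. Rearranging these inequalities for any $k \in S_{\theta'}$ yields the geometric containment $S_{\theta'} - m \subseteq P_\theta$, since
\[
\langle k - m, u_\rho \rangle \;\leq\; \lceil \langle \theta', u_\rho \rangle \rceil - \langle m, u_\rho \rangle \;\leq\; \lceil \langle \theta, u_\rho \rangle \rceil.
\]

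Next I would establish the combinatorial reformulation that $\rho \in J_\theta$ if and only if $P_\theta$ is entirely contained in the hyperplane $H_\rho \coloneq \{k \in M_\RR : \langle k, u_\rho \rangle = \lceil \langle \theta, u_\rho \rangle \rceil\}$. The nontrivial direction reduces to showing that $S_\theta$ and $P_\theta$ share the same affine span. This follows from the observation that $\theta \in S_\theta$ and that every strict lower bound defining $S_\theta$ is slack at $\theta$, since $\langle \theta, u_\rho \rangle > \lceil \langle \theta, u_\rho \rangle \rceil - 1$ always holds; hence $S_\theta$ coincides with $P_\theta$ in a neighborhood of $\theta$, forcing equal affine spans.

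Combining the two steps, for $\rho \in J_\theta$ one has $S_{\theta'} - m \subseteq P_\theta \subseteq H_\rho$, meaning $\langle k, u_\rho \rangle = \lceil \langle \theta, u_\rho \rangle \rceil + \langle m, u_\rho \rangle$ takes a fixed integer value as $k$ varies over $S_{\theta'}$. But the defining inequalities of $S_{\theta'}$ force $\langle k, u_\rho \rangle \in (\lceil \langle \theta', u_\rho \rangle \rceil - 1, \lceil \langle \theta', u_\rho \rangle \rceil]$, and the unique integer in this half-open interval is $\lceil \langle \theta', u_\rho \rangle \rceil$. Thus $\langle k, u_\rho \rangle = \lceil \langle \theta', u_\rho \rangle \rceil$ for every $k \in S_{\theta'}$, which is precisely the statement $\rho \in J_{\theta'}$.

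The hardest step will be extracting the precise form of the lattice point $m$ from \cite[Corollary 4.25]{king}, particularly because morphisms in $D_{\Cox}(X)$ are a priori computed on the common refinement $\tsX$ rather than on a single toric stack $\sX_i$; once the inequality on $m$ is in place, the remainder is a short convex-geometric computation.
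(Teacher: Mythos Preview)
Your proof is correct and follows essentially the same route as the paper's: both invoke \cite[Corollary~4.25]{king} to produce a lattice point $m$, use the equivalence between defining $J_\theta$ via $S_\theta$ or via $P_\theta$, and finish by identifying the unique integer in the half-open interval $(\lceil \langle \theta', u_\rho \rangle \rceil - 1,\, \lceil \langle \theta', u_\rho \rangle \rceil]$. Your containment $S_{\theta'} - m \subseteq P_\theta$ is slightly more direct than the paper's argument, which instead obtains $m + \theta' \in P_\theta$, deduces $\langle \theta', u_\rho\rangle \in \ZZ$, then replaces $\theta'$ by an arbitrary $k \in S_{\theta'}$ and appeals to connectedness of $S_{\theta'}$; the difference is cosmetic.
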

\begin{proof}
    By \cite[Corollary 4.25]{king} (and mulitiplying by $-1$), \eqref{eq:nonzerohom} implies that there is a lattice point $m \in M$ such that $m + \theta' \in P_\theta$. Now, suppose that $\rho \in J_{\theta}$ so that $\langle k , u_\rho \rangle = \lceil \langle \theta, u_\rho \rangle \rceil$ for all $k \in P_\theta$. 
    In particular, we obtain that
    \[ \langle \theta', u_\rho \rangle = \lceil \langle \theta, u_\rho \rangle \rceil + \langle m, u_\rho \rangle \in \ZZ. \]
    As we can replace $\theta'$ with any $k \in S_{\theta'}$, we obtain that $\langle k, u_\rho \rangle \in \ZZ$ for all $k \in S_{\theta'}$ and hence $\rho \in J_{\theta'}$ since $S_\theta$ is connected. 
\end{proof}

Now, we can use \cref{lem:orthoJ} to make a couple further observations about the Cox category. 

\begin{prop} \label{prop:dimSOD}
    Assume $X$ is projective. 
    For $0 \leq \delta \leq n = \dim(X)$, let $\cA_\delta$ be the subcategory of $D_{\Cox}(X)$ generated by the $\cO_{\Cox}(-d_\theta)$ with $\dim(S_\theta) = \delta$. Similarly, for $0 \leq j \leq N = |\Sigma(1)|$, let $\cB_j$ be the subcategory of $D_{\Cox}(X)$ generated by the $\cO_{\Cox}(-d_\theta)$ with $|J_\theta| = j$. Then, there is a dimension of strata semi-orthogonal decomposition
    \begin{equation} \label{eq:dimSOD}
    D_{Cox}(X) = \langle \cA_n, \hdots, \cA_0 \rangle
    \end{equation}
    and a depth of strata semi-orthogonal decomposition
    \begin{equation} \label{eq:jSOD}
    D_{Cox}(X) = \langle \cB_0, \hdots, \cB_{N} \rangle.
    \end{equation}
\end{prop}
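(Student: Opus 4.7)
The plan is to deduce both decompositions as direct consequences of \cref{lem:orthoJ} together with the full strong exceptional collection structure on $\Theta$ provided by \cref{thm:generalMain}. First, I would record the identity
\[
\dim S_\theta \;=\; n - \mathrm{rank}\{u_\rho : \rho \in J_\theta\},
\]
which is immediate once one observes that $S_\theta$ is relatively open in the affine subspace of $M_\RR$ cut out by the equalities $\langle k, u_\rho \rangle = \lceil \langle \theta, u_\rho \rangle \rceil$ for $\rho \in J_\theta$. Together with monotonicity of rank, this yields that $J_\theta \subseteq J_{\theta'}$ implies both $\dim S_\theta \geq \dim S_{\theta'}$ and $|J_\theta| \leq |J_{\theta'}|$.

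Combined with the contrapositive of \cref{lem:orthoJ}, this delivers the two Hom-vanishing statements needed. Namely, $\dim S_\theta < \dim S_{\theta'}$ forces $J_\theta \not\subseteq J_{\theta'}$ and hence $\Hom(\cO_{\Cox}(-d_\theta), \cO_{\Cox}(-d_{\theta'})) = 0$; similarly $|J_\theta| > |J_{\theta'}|$ forces the same vanishing. Since $\Theta$ is strong exceptional, all positive-degree Ext groups between its members vanish automatically, so these degree-zero statements promote to full Ext-vanishing.

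To assemble the decompositions, I would exhibit total orderings of $\Theta$ in which the prescribed partitions appear as consecutive blocks of a valid strong exceptional ordering. For \eqref{eq:dimSOD}, sort $\Theta$ first by decreasing $\dim S_\theta$, breaking ties by any refinement of the effective partial order on $\Cl(X)$ that makes $\Theta$ strong exceptional via \cref{thm:generalMain}. The cross-block vanishing above together with the intra-block validity inherited from \cref{thm:generalMain} show this modified ordering is still strong exceptional, and the blocks $\cA_n, \dots, \cA_0$ then appear in this order as consecutive chunks, yielding \eqref{eq:dimSOD}; fullness follows from $\Theta$ generating $D_{\Cox}(X)$. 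The argument for \eqref{eq:jSOD} is identical, this time sorting first by increasing $|J_\theta|$. I expect no serious obstacle: once the dimension identity and \cref{lem:orthoJ} are in hand, the remaining work is bookkeeping to confirm that the direction of Hom-vanishing matches the SOD convention that later components are semi-orthogonal to earlier ones.
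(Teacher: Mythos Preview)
Your proposal is correct and follows essentially the same route as the paper: both arguments reduce to the contrapositive of \cref{lem:orthoJ} together with the implication $J_\theta \subseteq J_{\theta'} \Rightarrow \dim S_\theta \geq \dim S_{\theta'}$, and then package the resulting Hom-vanishing into an SOD using exceptionality of $\Theta$. Your version is slightly more explicit (you record the identity $\dim S_\theta = n - \operatorname{rank}\{u_\rho : \rho \in J_\theta\}$ and spell out the reordering, whereas the paper simply asserts the dimension implication and invokes admissibility), but the substance is the same.
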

\begin{proof}
    Since the $\cO_{Cox}(-d_\theta)$ are all exceptional objects, the $\cA_\delta$ and $\cB_j$ are all admissible subcategories. It therefore is enough to show that $\cA_\delta$ is in the left orthogonal to $\cA_{\delta'}$ if $\delta < \delta'$ and that $\cB_j$ is in the left orthogonal to $\cB_{j'}$ if $j' < j$.
    The latter immediately follows from \cref{lem:orthoJ} since $|J_{\theta'}| < |J_{\theta}|$ implies that $J_\theta \not \subseteq J_{\theta'}$. 
    For the former, we also apply \cref{lem:orthoJ} and note that $J_\theta \subseteq J_{\theta'}$ implies that $\dim(S_\theta) \geq \dim(S_{\theta'})$.  
\end{proof}

As the $G$-action is reflected by its action on $M_\RR/M$, we obtain that it preserves these semi-orthogonal decompositions.

\begin{prop} \label{prop:Gsod}
    If $X$ is a projective toric variety, the $G$ action on $D_{Cox}(X)$ preserves the dimension and depth of strata semi-orthogonal decompositions.
\end{prop}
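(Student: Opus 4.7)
The plan is to show that for every $g \in G$ and every $\theta \in M_\RR$, both $\dim(S_\theta)$ and $|J_\theta|$ are invariant under replacing $\theta$ by $g_\ast \theta$, where $g_\ast \in \Aut(M)$ is dual to $g^{-1}$. From the proof of \cref{lem:GTheta}, the $G$-action on $D_{\Cox}(X)$ sends $\cO_{\Cox}(-d_\theta)$ to $\cO_{\Cox}(-d_{g_\ast\theta})$, so invariance of these two integer statistics will imply that every subcategory $\cA_\delta$ (resp.\ $\cB_j$) is carried to itself, which is precisely what it means for the $G$-action to preserve the semi-orthogonal decompositions \eqref{eq:dimSOD} and \eqref{eq:jSOD}.

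The workhorse is the elementary identity
\[
\langle g_\ast k, u_\rho \rangle \;=\; \langle k, g^{-1}(u_\rho) \rangle \;=\; \langle k, u_{g^{-1}(\rho)} \rangle
\]
valid for all $k \in M_\RR$ and $\rho \in \Sigma(1)$, which uses only that $g_\ast$ is dual to $g^{-1}$ and that $g$ permutes $\Sigma(1)$. Substituting this identity into the inequalities defining $S_\theta$ shows that the linear map $k \mapsto g_\ast k$ restricts to a bijection $S_\theta \xrightarrow{\sim} S_{g_\ast \theta}$, so $\dim S_\theta = \dim S_{g_\ast \theta}$ and consequently the $G$-action permutes the generators of each $\cA_\delta$ among themselves. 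An analogous check using the same identity shows that $g$ itself restricts to a bijection $J_\theta \xrightarrow{\sim} J_{g_\ast \theta}$ on subsets of $\Sigma(1)$, so $|J_\theta| = |J_{g_\ast \theta}|$ and each $\cB_j$ is preserved as well.

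I expect no serious obstacle: once one is careful with the duality bookkeeping (working with $g_\ast$ rather than $g$ on $M_\RR$), the proposition reduces to observing that $S_\theta$ and $J_\theta$ are defined by hyperplane conditions on $\Sigma(1)$ that are manifestly equivariant under the induced $G$-action on $M_\RR/M$, exactly as foreshadowed in the paragraph introducing the sets $S_\theta$ and $P_\theta$. The only minor subtlety worth flagging in the write-up is to explain why permuting the generators of each $\cA_\delta$ (resp.\ $\cB_j$) actually implies preservation of the admissible subcategory; this is immediate because each $\cA_\delta$ and $\cB_j$ is, by definition, the triangulated subcategory generated by its specified set of line bundles, and triangulated autoequivalences commute with the formation of such hulls.
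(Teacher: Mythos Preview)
Your proposal is correct and follows essentially the same approach as the paper's own proof. The paper argues more tersely---observing that both decompositions are determined by $S_\theta$ and then invoking $g_\ast S_\theta = S_{g_\ast\theta}$ together with the formula $g\cdot\cO_{\Cox}(-d_\theta)=\cO_{\Cox}(-d_{g_\ast\theta})$ from the proof of \cref{lem:GTheta}---while you spell out the verifications for $\dim S_\theta$ and $|J_\theta|$ separately, but the substance is the same.
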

\begin{proof}
    For both semi-orthogonal decompositions \eqref{eq:dimSOD} and \eqref{eq:jSOD}, we observe that these are determined by $S_\theta$. The claim then follows from the fact that $g_* S_\theta = S_{g_*\theta}$ and that the proof of \cref{lem:GTheta} shows that $g \cO_{\Cox}(-d_\theta) = \cO_{\Cox}(-d_{g_*\theta})$.
\end{proof}

In particular, each orbit of the $G$ action on $\Theta$ is contained in one of the $\cA_\delta$ and one of the $\cB_j$.
In \cref{ex:P2}, the two semi-orthogonal decompositions coincide, but the orbits of $G$ are smaller.
Typically, the two semi-orthogonal decompositions and the orbits all differ.
We will see further examples of this behavior in \cref{sec:permex} which examines permutohedral varieties.

We will conclude this section with an observation in another direction that also takes advantage of interpeting the $G$-action on $\Theta$ in terms of $M_\RR/M$. 
One of the utilities of the Cox category is as a repository for uniform resolutions by the line bundles in $\Theta$ across all of the $\sX_i$. 
For instance, it is a natural home to the resolutions constructed in \cite{HHL}. 
We briefly recall the construction of these HHL resolutions from \cite{HHL} (see also \cite{BCHSY,short-HST,borisov2025hanlon}).
In fact, we will describe the lifts of these resolutions to the Cox category (by \cite{BCHSY, borisov2025hanlon}, the HHL resolutions are in fact resolutions in the homotopy category of free $S$-modules supported in $\Theta$, a category equivalent to $D_{\Cox}(X)$).
There is an HHL resolution associated to any lattice inclusion $\phi \colon N' \hookrightarrow N$. 
We set $T_\phi$ to be the kernel of the induced map $M_\RR/M \to M_\RR'/M'$ and $\widetilde{T}_\phi$ to be its preimage in $M_\RR$. 
Each $S_\theta \cap \widetilde{T}_\phi$ has closure a polytope in $\widetilde{T}_\phi$. 
The degree $j$ part of the HHL resolution is the direct sum of $\cO_X (-d_\theta)$ over the images in $M_\RR/M$ of faces of the $S_\theta$ of dimension $j$. 
If $-\theta'$ (possibly equal to $-\theta$) lies on a face of the closure of $S_\theta$, there is a morphism $\cO_X(-d_\theta) \to \cO_X(-d_{\theta'})$ corresponding to $-\theta' \in P_\theta$.
After choosing orientations for all the images of faces of $S_\theta$ on $M_\RR/M$, these morphisms are the maps in the HHL resolution with a sign determined by whether or not the induced orientation agrees with the chosen one on a face.
Note that different choices of orientations give isomorphic complexes.
The main result of \cite{HHL} implies that after pushing forward to $\sX_i$, this construction resolves the structure sheaf of the normalization of the closure of $N' \otimes \mathbb{G}_m$ in $\sX_i$ for all $i$. 
From the observations in this paper, we can deduce certain symmetries of this construction.

\begin{prop} \label{prop:ginvHHL}
    Suppose $X$ is a semiprojective toric variety. If the image of $\phi$ is $\Gamma$-invariant for a subgroup $\Gamma \subseteq G$, then the HHL resolution associated to $\phi$ is $\Gamma$-invariant up to isomorphism.
\end{prop}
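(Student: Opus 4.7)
The plan is to exploit the fact that the HHL resolution is built entirely from data living in $M_\RR$, namely the subtorus $\widetilde{T}_\phi$, the Bondal strata $S_\theta$, and the polytopes $P_\theta$, all of which transform predictably under the dual action $\gamma_*$ of $\gamma \in \Gamma$. \cref{lem:GTheta} (and the proof of \cref{prop:Gsod}) already records that $\gamma_* S_\theta = S_{\gamma_*\theta}$ and similarly $\gamma_* P_\theta = P_{\gamma_*\theta}$, and that $\gamma \cdot \cO_{\Cox}(-d_\theta) = \cO_{\Cox}(-d_{\gamma_*\theta})$. Thus once we check that $\widetilde{T}_\phi$ is $\Gamma$-invariant, the entire configuration of intersections $S_\theta \cap \widetilde{T}_\phi$, their faces, and their containment structure will be $\Gamma$-equivariant, which is what we need to produce an isomorphism of complexes.

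First I would verify that $\widetilde{T}_\phi$ (and hence $T_\phi$) is $\Gamma$-invariant. The hypothesis is that $\phi(N') \subseteq N$ is $\Gamma$-stable, so each $\gamma \in \Gamma$ restricts to an automorphism of $N'$. Dualizing, the restriction map $M \to M'$ is $\Gamma$-equivariant, so its $\RR$-linear kernel, and the preimage of that kernel under $M_\RR \to M_\RR/M$, are $\Gamma$-invariant. Consequently $\gamma_*$ permutes the relatively open sets $S_\theta \cap \widetilde{T}_\phi$, sending the one indexed by $\theta$ to the one indexed by $\gamma_*\theta$, and likewise permutes their faces of each fixed dimension $j$.

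Next I would assemble these bijections into a chain-level action on the HHL complex. In each homological degree $j$, the summands are indexed by the images in $M_\RR/M$ of $j$-dimensional faces of the $S_\theta \cap \widetilde{T}_\phi$, and $\gamma_*$ induces a bijection on this indexing set that matches the action of $\gamma$ on $\Theta$ from \cref{lem:GTheta}. The differential from $\cO_{\Cox}(-d_\theta)$ to $\cO_{\Cox}(-d_{\theta'})$ corresponds via \cite[Corollary 4.25]{king} to the lattice element $-\theta' \in P_\theta$; applying $\gamma_*$ sends this datum to $-\gamma_*\theta' \in P_{\gamma_*\theta}$, which is precisely the datum defining the corresponding differential in the $\gamma$-translated complex.

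The main obstacle I anticipate is bookkeeping the sign conventions coming from the choice of orientations on the images of faces of the $S_\theta$. The $\gamma_*$-pullback of a chosen orientation need not coincide with the orientation independently chosen at the image face, so we obtain an honest chain isomorphism only after absorbing these discrepancies into global sign changes on summands. Since the paper already records that different orientation choices yield isomorphic complexes, this gives precisely the $\Gamma$-invariance \emph{up to isomorphism} asserted in the proposition.
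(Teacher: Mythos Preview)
Your proposal is correct and follows essentially the same route as the paper's proof: both argue that $\Gamma$-invariance of the image of $\phi$ forces $\widetilde{T}_\phi$ to be $\Gamma$-invariant, then invoke the compatibility of the $G$-action on $\Theta$ with the action on the strata $S_\theta$ (via \cref{lem:GTheta} and the proof of \cref{prop:Gsod}), check that the differentials transform by $-\theta' \in P_\theta \mapsto -\gamma_*\theta' \in P_{\gamma_*\theta}$, and finally absorb the orientation discrepancies into the ``up to isomorphism'' conclusion. Your version is a bit more explicit about the dualization step showing $\widetilde{T}_\phi$ is invariant, but the argument is otherwise identical.
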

\begin{proof}
    If the image of $\phi$ is $\Gamma$-invariant, then $T^\phi$ and $\widetilde{T}^\phi$ are also $\Gamma$-invariant under the dual action. 
    The proof of \cref{lem:GTheta} shows that $g \cdot \cO_{\Cox}(-d_\theta)$ is consistent with action of $G$ on the sets $S_\theta$ as in the proof of \cref{prop:Gsod}. 
    Further, the action of $(\phi_g)_*$ on morphisms is determined by the underlying action on lattices which takes $-\theta' \in P_\theta$ to $-g_*\theta' \in P_{g_* \theta}$.
    As each $g \in G$ is not guaranteed to preserve our chosen orientations, we conclude that $\Gamma$ preserves the HHL resolution associated to $\phi$ up to sign, and hence up to isomorphism.
\end{proof}

Note that $0 \in N$ is always $G$-invariant and thus the HHL resolution of the structure sheaf of the identity point is always invariant up to isomorphism under the $G$ action. 
Similarly, the diagonal in $N \times N$ is always invariant under the diagonal subgroup of $G \times G$, and hence \cref{prop:ginvHHL} implies that the HHL resolution of the diagonal is invariant up to isomorphism under the action of the diagonal subgroup of $G \times G$.

\section{Example: permutohedral variety} \label{sec:permex}

We now explore the $G$-action on the Cox category of a permutohedral variety with the aid of some of the observations of \cref{sec:furtherInvariant}.
First, recall that the permutohedral variety $X_{n+1}$ with $n \geq 2$ is the toric variety whose moment polytope is the permutohedron.
The fan of $X_{n+1}$, sometimes called the braid fan, is most symmetrically presented as lying in $\RR^{n+1}/\langle (1,\hdots, 1) \rangle$ with rays generated by the images of the vectors $e_S$ where $\emptyset \neq S \subsetneq \{1, \hdots, n +1 \}$ and $e_S = \sum_{i \in S} e_i$. 
The group $G$ of symmetries of these vectors is isomorphic to $S_{n+1} \times S_2$, which is also the toric automorphism group of $X_{n+1}$. 
The first factor of $G \cong S_{n+1} \times S_2$ acts by permuting the elements of $S$, and the second factor acts by taking $S$ to its complement. 

Less elegantly, $X_{n+1}$ is obtained by blowing up $\PP^{n}$ at all of its toric orbit closures in order of increasing dimension of the blow-up locus, and we can obtain a fan from the usual fan for $\PP^{n}$ in $\RR^n$.
This amounts to setting $e_{n+1} = -e_1 -\hdots - e_n$ to identify the quotient $\RR^{n+1}/\langle (1,\hdots, 1)\rangle$ in our first description of the braid fan with $\RR^n$.

Let us first discuss the case where $n=2$ so we are considering $X_3$ which is the del Pezzo surface obtained by blowing up $\PP^2$ at its three torus fixed points. 
We can choose a basis so that the rays $\Sigma_3(1)$ have generators
\[ \{ \pm e_1, \pm e_2, \pm (e_1 + e_2) \}. \]
The $S_3$ factor of $G$ permutes $e_1$, $e_2$, and $-e_1 - e_2$, and the $S_2$ factor is generated by $-\mathrm{id}$.
In this case, $D_{\Cox}(X_3) = D(X_3)$ and there are six Bondal-Thomsen elements in the full strong exceptional collection. 
These are depicted in \Cref{fig:perm3} on the Bondal stratification using the divisors $H, E_{12}, E_{10}, E_{20}$ as a basis for the Picard group where $H$ is the pullback of the hyperplane class from $\PP^2$ and $E_{ij}$ is the exceptional divisor over the torus fixed point $z_i = z_j = 0$ in homogeneous coordinates. 
The regions $S_\theta$ are the relative interiors of the strata of the triangulation of the square in \Cref{fig:perm3} and the polytopes $P_\theta$ are their closures.
The dimension and depth semi-orthogonal decompositions from \cref{prop:dimSOD} coincide in this example, and it is straightforward to check that theses are also the orbits of the $S_3 \times S_2$ action.
For example, the $S_2$ factor simply permutes $\cO(-H)$ and $\cO(-2H + E_{01} + E_{02} + E_{12})$ while the $S_3$ factor permutes only the three Bondal-Thomsen elements lying on edges of the triangulation in \Cref{fig:perm3}.

\begin{figure}
    \begin{center}
    \begin{tikzpicture}[decoration=border]
    \begin{scope}[scale=2]
    
    \draw [thick, postaction={draw,decorate, black, decoration={border, amplitude=0.09cm, angle=90, segment length = .25cm}}]
        (0,3) -- (3,0);
    \draw [thick, postaction={draw,decorate, black, decoration={border, amplitude=0.09cm, angle=90, segment length = .25cm}}]
        (3,0) -- (0,3);
    \draw [thick, postaction={draw,decorate, black, decoration={border, amplitude=0.09cm, angle=90, segment length = .25cm}}]
        (0,3) -- (0,0);
    \draw [thick, postaction={draw,decorate, black, decoration={border, amplitude=0.09cm, angle=90, segment length = .25cm}}]
        (0,0) -- (0,3);
    \draw [thick, postaction={draw,decorate, black, decoration={border, amplitude=0.09cm, angle=90, segment length = .25cm}}]
        (3,3) -- (3,0);
    \draw [thick, postaction={draw,decorate, black, decoration={border, amplitude=0.09cm, angle=90, segment length = .25cm}}]
        (3,0) -- (3,3);
    \draw [thick, postaction={draw,decorate, black, decoration={border, amplitude=0.09cm, angle=90, segment length = .25cm}}]
        (0,0) -- (3,0) -- (0,0);
    \draw [thick, postaction={draw,decorate, black, decoration={border, amplitude=0.09cm, angle=90, segment length = .25cm}}]
        (0,3) -- (3,3) -- (0,3);

    \node[label=below:{\footnotesize $\cO$}] at (0,0) {$\bullet$};
    \node[label=below:{\footnotesize $\cO(-H + E_{01})$}] at (1.5,0) {$\bullet$};
    \node[label=left:{\footnotesize $\cO(-H + E_{02})$}] at (0,1.5) {$\bullet$};
    \node[label=right:{\footnotesize $\cO(-H + E_{12})$}] at (1.5,1.5) {$\bullet$};
    \node[label=below:{\footnotesize $\cO(-2H + E_{01} + E_{02} + E_{12})$}] at (1.15,0.9) {$\bullet$};
    \node[label=above:{\footnotesize $\cO(-H)$}] at (1.85,2.15) {$\bullet$};
        
    \end{scope}
    \end{tikzpicture}
    \end{center}
    \caption{A fundamental domain of the Bondal stratification for the permutohedral variety $X_3$ with the strata labeled by the corresponding Bondal-Thomsen elements.}
    \label{fig:perm3}
\end{figure}
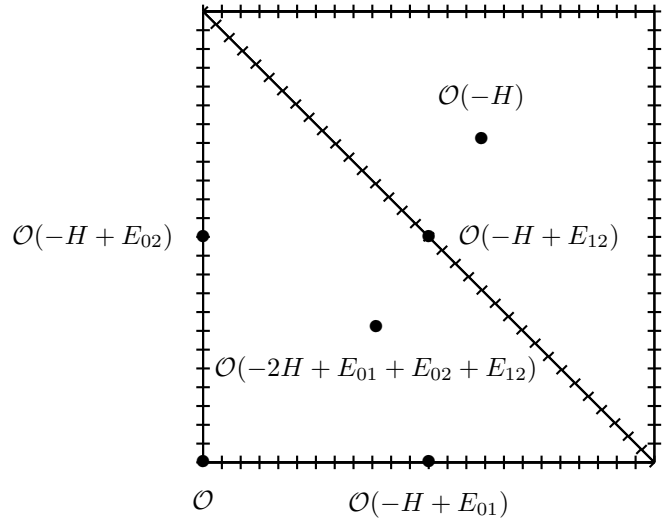

Let's turn our attention to $n = 3$. 
As in the previous dimension, every $S_\theta$ is equal to its relative interior as can be easily checked to be the case for any set of ray generators closed under multiplication by $-1$. 
Now, the Bondal stratification on a fundamental domain slices the cube $[0,1]^3$ into two standard simplices and an octahedron by the hyperplanes $x_1 + x_2 + x_3 = 1$ and $x_1 + x_2 + x_3 = 2$. 
The hyperplanes $x_i + x_j = 1$ further subdivide the octahedron into eight simplices of volume $1/12$. 
See \Cref{fig:perm4}.
From that, we see that there are $2$ vertices, $12$ edges, $20$ faces, and $10$ simplices in the Bondal stratification of the torus leading to a total of $44$ elements in the Bondal-Thomsen collection. 

\begin{figure}
    \begin{center}
\begin{tikzpicture}
\begin{scope}[scale = 4]
\coordinate (O) at (0,0,0);
\coordinate (I) at (1,1,1);

\draw[thick, black] (O) -- (1,0,0) -- (1,0,1) -- (0,0,1) -- cycle;
\draw[thick, black] (O) -- (0,1,0) -- (1,1,0) -- (1,0,0) -- cycle;
\draw[thick, black] (O) -- (0,0,1) -- (0,1,1) -- (0,1,0) -- cycle;
\draw[thick, black,fill=blue, fill opacity = 0.1] (I) -- (1,1,0) -- (1,0,0) -- (1,0,1) -- cycle;
\draw[thick, black,fill=blue, fill opacity = 0.1] (I) -- (1,0,1) -- (0,0,1) -- (0,1,1) -- cycle;
\draw[thick, black,fill=blue, fill opacity = 0.1] (I) -- (1,1,0) -- (0,1,0) -- (0,1,1) -- cycle;
\draw[thick, black] (1,0,0) -- (0,1,0) -- (0,0,1) -- cycle;
\draw[thick, black ] (1,1,0) -- (1,0,1) -- (0,1,1) -- cycle;

\begin{scope}[shift = {(2,0,0)}]
\draw[thick, black,fill=blue, fill opacity = 0.1] (1,1,0) -- (1,0,1) -- (0,1,1) -- cycle;
\draw[thick, black,fill=blue, fill opacity = 0.1] (1,1,0) -- (0,1,1) -- (0,1,0) -- cycle;
\draw[thick, black,fill=blue, fill opacity = 0.1] (1,1,0) -- (1,0,0) -- (1,0,1) -- cycle;
\draw[thick, black] (0,1,0) -- (1,0,0);

\draw[thick, black,fill=blue, fill opacity = 0.1] (1,0,1) -- (0,1,1) -- (0,0,1) -- cycle;
\draw[thick, black] (0,0,1) -- (0,1,0);
\draw[thick, black] (0,0,1) -- (1,0,0);
\draw[thick, black] (1,0,0) -- (0,1,1);
\draw[thick, black] (0,1,0) -- (1,0,1);
\draw[thick, black] (0,0,1) -- (1,1,0);
\end{scope}
\end{scope}
\end{tikzpicture}
\caption{On the left, we cut the cube into two simplices and an octahedron as a first step in understanding the Bondal stratification for $X_4$. On the right, we finish drawing the stratification by cutting the octahedron into eight simplices.}
\label{fig:perm4}
\end{center}
\end{figure}
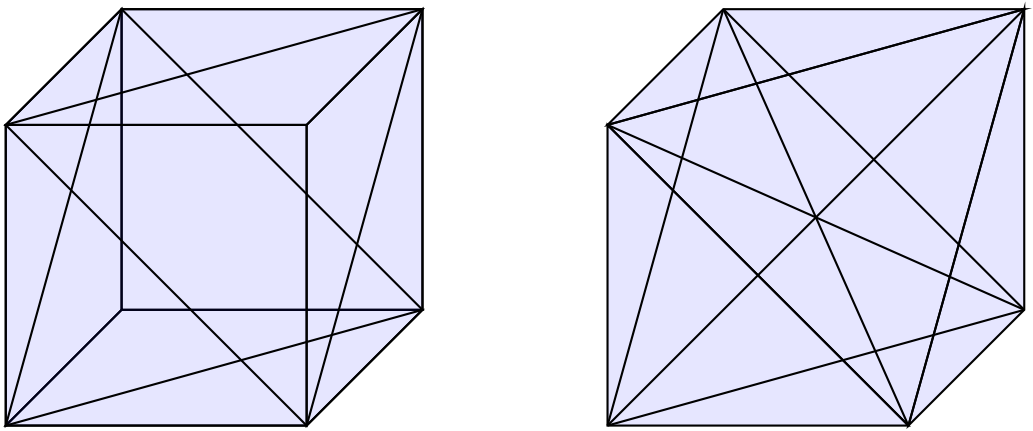

 \begin{remark} \label{rem:Sanchezcompare} 
 We see from the calculations above that $D(X_{4}) \not \cong D_{\Cox}(X_{4})$ as $ 44 = |\Theta| > 4! = \mathrm{rank}(K_0(X_{4}))$.
 However, the $G$-invariant full strong exceptional collection of line bundles on $D(X_{4})$ produced in \cite{sanchez2023derived} has similar features to $\Theta$, and it would be interesting to further explore their relationship (and also for larger $n$).
 Note also that Sanchez' collection is not simply a subset of $\Theta$.
 For example, some of the section polytopes corresponding to line bundles in Sanchez' collection are not simplices while all the $P_\theta$ are.
\end{remark}

Finally, let's study the action of $G \cong S_4 \times S_2$ on $\Theta$ in $D_{\Cox}(X_4)$. 
Let's begin by making a couple simple observations that distinguish this example from $X_3$.
First, the two zero-dimensional strata have different depths so both are fixed by $G$ and the depth and dimension semi-orthogonal decompositions do not coincide.
Further, neither the depth nor the dimension semi-orthogonal decompositions coincide with the $G$-orbits since among the top-dimensional strata, which all have depth $0$, are two standard simplices of volume $1/6$ which cannot be mapped to the simplices of volume $1/12$ inside the octahedron.
Turning back to our task, the entirety of the action can be understood geometrically from \Cref{fig:perm4}.
The $S_2$ factor acts on \Cref{fig:perm4} by reflection across the central point stratum of the octahedron.  
Transpositions $(ij)$ in the $S_4$ factor simply swap the $x_i$ and $x_j$ coordinates when $i,j \leq 3$. 
The remaining transpositions act by compositions of a reflection and a shear. 
For instance $(34)$ acts on \Cref{fig:perm4} by $\begin{pmatrix} 1 & 0 & 0 \\ 0& 1 & 0 \\ -1 & -1 & -1 \end{pmatrix}$.

As these examples illustrate, the Bondal-Thomsen collection and the Cox category for $X_{n+1}$ are combinatorially rich.
In fact, the Bondal-Thomsen elements for $X_{n+1}$ correspond to the strata of the periodic \emph{resonance arrangement}.
The fact that enumerating the chambers of the (unperiodized) resonance arrangement is a difficult open problem (see, e.g., \cite{kuhne2023universality}) suggests that explicitly describing $\Theta$ for $X_{n+1}$ may be difficult.
Nevertheless, it would be intriguing to study this combinatorics from the above perspective and perhaps address other particularly symmetric toric varieties as well.

\bibliographystyle{amsalpha}
\bibliography{refs}   

\Addresses
\end{document}